\documentclass[12pt]{article}

\usepackage{amsmath, amsfonts, amssymb}
\usepackage{mathrsfs}
\usepackage{theorem}
\usepackage{bm}
\RequirePackage[colorlinks,citecolor=blue,urlcolor=blue,bookmarksopen]{hyperref}

\pagestyle{plain} \setlength{\textheight}{20cm}
\setlength{\textwidth}{16cm} \setlength{\parindent}{2em}
\setlength{\oddsidemargin}{0cm} \setlength{\evensidemargin}{0cm}
\setlength{\parskip}{1.5ex plus 0.5ex minus 0.5ex}

\newtheorem{mythm}{Theorem}[section]

\newtheorem{mylem}[mythm]{Lemma}

{\theorembodyfont{\rmfamily} \newtheorem{rem}[mythm]{Remark}}
{\theorembodyfont{\rmfamily} }

\newcommand{\ra}{\rightarrow}
\newcommand{\dis}{\displaystyle}

\def\R{\mathbb R}

\def\d{\text{\rm{d}}}
\def\E{\mathbb E}
\def\p{\mathbb P}\def\e{\text{\rm{e}}}
\def\q{\mathbb Q}
\def\la{\langle}
\def\raa{\rangle}
\def\La{\Lambda}
\def\veps{\varepsilon}

\def\S{\mathcal S}

\def\wt{\widetilde}

\newcommand{\fin}{\hfill $\square$\par}
\newenvironment{proof}{{\noindent\it Proof.}\ }{\hfill $\square$\par}

\numberwithin{equation}{section}

\allowdisplaybreaks

\begin{document}

\title{Stability of regime-switching processes under perturbation of transition rate matrices\footnote{Supported in
 part by NNSFs of China (Nos. 11771327, 11831014, 11431014)}}

\author{Jinghai Shao\thanks{Center for Applied Mathematics, Tianjin University, Tianjin 300072, China. Email: shaojh@tju.edu.cn.}  and Chenggui Yuan \thanks{Department of Mathematics, Swansea University, Singleton Park, SA2 8PP, UK. Email: C.Yuan@swansea.ac.uk}}

\maketitle

\begin{abstract}
  This work is concerned with the stability of regime-switching processes under the perturbation of the transition rate matrices. From the viewpoint of application, two kinds of perturbations are studied: the size of the transition rate matrix is fixed, and only the values of entries are perturbed; the values of entries and the size of the transition matrix are all perturbed. Moreover, both regular and irregular coefficients of the underlying system are investigated, which clarifies the impact of the regularity of the coefficients on the stability of the underlying system.
\end{abstract}

\noindent AMS subject Classification (2010):\ 60J27, 60J60, 60A10

\noindent \textbf{Keywords}: Regime-switching diffusions, Stability, Wasserstein distance, Transition rate matrices

\section{Introduction}
Regime-switching models have emerged in many research fields such as biological, ecological, mathematical finance, economics and storage modeling.
We refer the readers to \cite{BS, BBG, CH, GAM, PP, Sh1, SX2, XY} and the
monographs \cite{MY, YZ} for the study on ergodicity,
stochastic stability, numerical approximation of regime-switching
diffusion processes with Markovian switching or state-dependent
switching in a finite state space or in an infinite state space. These kinds of models contain two components $(X_t,\La_t)$. The first component $(X_t)$ is used to describe the dynamical system under investigation and the second component $(\La_t)$ is used to describe the random change of the environment where the dynamical system lives in. Since the impact of the change of environment has been considered in these models, they  can fit practice more precisely. Moreover, recent works have found more and more special characteristics of these models compared with those models without regime-switching. For instance, the invariant probability measures of Ornstein-Uhlenback processes and Cox-Ingersoll-Ross processes with regime-switching may be heavy tailed, whereas without regime-switching, their invariant probability measures must be light tailed; see, \cite{Bar,DY} and \cite{HS18}.

The stability of regime-switching processes is of great interest and there is a great deal of literatures in this topic; see, for example,  \cite{BBG, BBG99, M99, MY, XY, YZ} and references therein. All the aforementioned works focus on the stability of this system with respective to its equilibrium point or initial values. However, the stability of this system with respective to  the perturbation of the transition rate matrix of $(\La_t)$ has not been studied before. This kind of stability plays a crucial role in the application of the regime-switching diffusion processes; for example, performing sensitivity analysis.

In application, the random switching of the environment is observed from empirical data. Then, the transition rate matrix $(q_{ij})_{i,j\in\S}$ is estimated by statistical method based on empirical data. Therefore, the error of estimation is crucial and cannot be removed. As a consequence, the impact of this error of estimation should be evaluated.  For instance, as shown by Brown and Dybvig \cite{BD}, based on the empirical data from
US treasury yields, the poor empirical performance of the  Cox-Ingersoll-Ross model without the regime-switching well suggests the existence of regime shifts. So, one may include the regime-switching of the financial market into the Cox-Ingersoll-Ross model. It is quite possible to consider that there are three different states in the financial market: bull market, bear market and a middle market. In this case, one uses a Markov chain $(\La_t)$ in a state space $\S=\{0,1,2\}$ to characterize the random change of the financial market. There is the error of estimation for $(q_{ij})_{i,j\in\S}$ of the transition rate matrix of $(\La_t)$. On the other hand, maybe other experts would like to separate the financial market into two different states: bull market and bear market. The effects of the option pricing  by using  models with two or three states
could be quite different. Therefore, it is quite important to measure this difference.

For the regime-switching diffusions $(X_t,\La_t)$, $(X_t)$ satisfies the following stochastic differential equation (SDE for short):
\begin{equation}\label{1.1}
\d X_t=b(X_t,\La_t)\d t+\sigma(X_t,\La_t)\d W_t,\quad X_0=x_0\in\R^d,\  \La_0=i_0\in\S,
\end{equation} where $b:\R^d\times\S\ra \R^d$, $\sigma:\R^d\times \S\ra \R^{d\times d}$, $\S=\{0,1,\ldots,N\}$, $N<\infty$, and $(W_t)$ is a $d$-dimensional Brownian motion. $(\La_t)$ is a continuous-time Markov chain on $\S$ with the transition rate matrix $Q=(q_{ij})_{i,j\in\S}$. Suppose that $Q$ is conservative (i.e. $\sum_{j\in \S} q_{ij}=0$ for every $i\in\S$) and totally stable (i.e. $q_i=-q_{ii}<+\infty$ for every $i\in\S$).  Throughout this work, $(\La_t)$ and $(W_t)$ are assumed to be mutually independent.

In this work we are concerned with the stability of the process $(X_t)$ under perturbation of the transition rate matrix of $(\La_t)$.
From the application point of view, there are mainly two types of perturbations of $Q$.

\emph{First type of perturbation}: The size of $Q$ is fixed, however, each entry $q_{ij}$ of $Q$ may have small perturbation. Namely,
there is another transition rate matrix $\wt Q=(\tilde q_{ij})_{i,j\in\S}$, and each entry $\tilde q_{ij}$ acts as an estimator of the element $q_{ij}$ of $Q$. Without loss of generality, assume that $\wt Q$ is conservative and totally stable, then a unique transition function $\wt P_t,\, t\geq 0$ is determined (cf. e.g. \cite[Corollary 3.12]{Chen}). Let $(\tilde \La_t)$ be a continuous-time Markov chain starting from $i_0$ corresponding to $\wt Q$. Then the distribution of $\tilde \La_t$ is fixed,
so, a new dynamical system $(\wt X_t)$ is induced from the process $(\tilde \La_t)$, i.e.
\begin{equation}\label{1.2}
\d \wt X_t=b(\wt X_t,\tilde \La_t)\d t+\sigma(\wt X_t,\tilde \La_t)\d W(t),\quad \wt X_0=x_0\in\R^d,\ \tilde \La_0=i_0\in\S.
\end{equation}
Under some suitable conditions of the coefficients $b(\cdot,\cdot)$ and $\sigma(\cdot,\cdot)$, SDEs \eqref{1.1} and \eqref{1.2} admit a unique solution (cf. e.g. \cite{MY}). Therefore, the distributions $\mathcal{L}(X_t)$ of $X_t$ and $\mathcal{L}(\wt X_t)$ of $\wt X_t$ are determined in some sense by the transition rate matrix $Q$  and $\wt Q$ respectively. The following basic and important question therefore arises:
\begin{itemize}
\item[$-$] Can the difference between the distributions of $X_t$ and $\wt X_t$ be estimated by the difference between $Q$ and $\wt Q$?
\end{itemize}

\emph{Second type of perturbation}:  Both the entries of $Q$ and the size of $Q$ can be changed.  In application, when facing the graphs drawn from  experimental data,  it is hard sometimes to determine the number of the regimes for  the regime-switching processes. For example, if there are actually three regimes,  the process stays for a very short period of time at one of them. From this kind of experimental data,  it is very likely that a regime-switching model with only two regimes is detected.
What is the impact caused by this incorrect choice of the number of states for the regime-switching processes?

Precisely, let $\widehat Q$ be a conservative transition rate matrix on  $E:=\S\backslash\{0,1,\ldots, m\}$ with $m<N$, which determines uniquely the semigroup $\hat P_t=\e^{t\widehat Q},\ t\geq 0$ on $E$. Let $(\hat \La_t)$ be a continuous-time Markov chain on $E$ corresponding to $(\hat P_t)$ or equivalently $\widehat Q$. Using the same coefficients $b(\cdot,\cdot)$, $\sigma(\cdot,\cdot)$ as those of SDE \eqref{1.1},
we consider a new dynamical system $(\hat X_t)$ corresponding to $(\hat \La_t)$ defined by:
\begin{equation}\label{1.3}
\d \hat X_t=b(\hat X_t,\hat \La_t)\d t+\sigma(\hat X_t,\hat \La_t)\d W_t, \quad \hat X_0=x_0\in\R^d,\ \hat \La_0=i_0\in E.
\end{equation}
Under suitable conditions of $b$ and $\sigma$, the solutions of \eqref{1.1} and \eqref{1.3} are uniquely determined (cf. \cite{MY}). This means that given $\widehat Q$ on $E$, the distribution of $\hat X_t$ is then determined. Denote $\mathcal{L}(X_t)$ and $\mathcal{L}(\hat X_t)$ the distributions of $X_t$ and $\hat X_t$ respectively.
We aim to measure the Wasserstein distance $W_2(\mathcal{L}(X_t),\mathcal{L}(\hat X_t))$ via the difference between the transition rate matrices $Q=(q_{ij})_{i,j\in\S}$ and $\hat Q=(\hat q_{ij})_{i,j\in E}$. To achieve this, reformulate $Q$ into the following form:
\begin{equation}\label{1.4}
Q=\begin{pmatrix}
  Q_0& A\\
  B & Q_1
\end{pmatrix},
\end{equation} where $Q_0\in \R^{m\times m}$, $A\in \R^{m\times (N-m)}$,
$B\in\R^{(N-m)\times m}$, and $Q_1\in \R^{(N-m)\times (N-m)}$.

Our method in this paper establishes a connection between the stability of regime-switching processes with the perturbation theory of the continuous time Markov chains under the help of Skorokhod's representation theory for Markov chains. This result develops the classical perturbation theory (cf. e.g. \cite{Mit03,Mit04,ZI}) focusing on the difference of fixed time $t$ to that of a time interval $[0,t]$. The perturbation theory of continuous time Markov chain was applied to study the strong ergodicity of Markov chain (cf. \cite{ZI} and references therein), and to perform sensitivity analysis (cf. \cite{Mit03,Mit04}). In this paper, we demonstrate its connection with the stability of regime-switching processes, allowing us to performing sensitivity analysis for regime-switching processes arising from applications. In addition, to clarify the impact of the regularity of the drifts of the underlying system on this stability issue, we consider the system with regular coefficients (i.e. satisfying one-sided Lipschitz condition) and irregular coefficients (i.e. satisfying integrability condition). To deal with the irregular case, we apply a technique based on the dimension-free Harnack inequality. The coefficients in the irregular case can be very singular; see example \eqref{c-2} below.

Let us first consider the situation that the coefficients of \eqref{1.1} are regular. Assume the coefficients $b:\R^d\times\S\ra \R^d$ and $\sigma:\R^d\times\S\ra \R^{d\times d}$ satisfy:
{ \begin{itemize}
  \item[$\mathrm{(H1)}$] For each $i\in\S$ there exists a constant $\kappa_i$ such that
  \[2\la x-y,b(x,i)-b(y,i)\raa+2\|\sigma(x,i)-\sigma(y,i)\|_{\mathrm{HS}}^2\leq \kappa_i|x-y|^2,\quad x,\,y\in \R^d.\]
  \item[$\mathrm{(H2)}$] There exists a constant $K$ such that
  \[|b(x,i)|^2\leq K(1+|x|^2),\quad \|\sigma(x,i)\|_{\mathrm{HS}}^2\leq K(1+|x|^2),\quad x\in \R^d,\ i\in\S.\]
\end{itemize} }
In this case, we shall use the Wasserstein distance $W_2(\cdot,\cdot)$ to measure the difference between the distributions of $X_t$ and $\wt X_t$, which is defined by
\begin{equation}\label{wass}
W_2(\nu_1,\nu_2)^2=\inf_{\Pi\in\mathcal{C}(\nu_1,\nu_2)}
\Big\{\int_{\R^d\times\R^d}|x-y|^2\Pi(\d x,\d y)\Big\},
\end{equation}
where $\mathcal{C}(\nu_1,\nu_2)$ denotes the set of all probability measures on $\R^d\times\R^d$ with marginals $\nu_1$ and $\nu_2$.
To measure the difference between $Q$ and $\wt Q$, we use the $\ell_1$-norm $\|Q-\wt Q\|_{\ell_1}$ (i.e. the maximum absolute row sum norm) in this work, but other norm of matrix still works.

To state our results, we first introduce some notation. For an irreducible transition rate matrix $Q$ on $\S$, its corresponding transition probability measure $P_t(i,\cdot)$ must be strongly ergodic (cf. e.g. \cite[Theorems 4.43, 4.44]{Chen}). Denote $\pi=(\pi_i)$ the invariant probability measure of $Q$. Define $\tau$ to be the largest positive constant such that
 \begin{equation}\label{b-c}
 \sup_{i\in \S} \|P_t(i, \cdot)-\pi\|_{\mathrm{var}}=O(\e^{-\tau t}),\quad t>0,
 \end{equation}
 where $\|\mu-\nu\|_{\mathrm{var}}$ stands for the total variation distance between two probability measures $\mu$ and $\nu$, i.e. $\|\mu-\nu\|_\mathrm{var}=2\sup\{|\mu(A)-\nu(A)|; A\in \mathscr B(S)\}$.
 Additionally,  for $p>0$, let
  \[Q_p=Q+p\,\mathrm{diag}(\kappa_0,\kappa_1,\ldots,\kappa_N),\]
  and
  \begin{equation}\label{eta}\eta_p=-\max\big\{\mathrm{Re}(\gamma);\,\gamma\in \mathrm{spec}(Q_p)\big\},
  \end{equation}
  where $\mathrm{diag}(\kappa_0,\kappa_1,\ldots,\kappa_N)$ denotes the diagonal matrix generated by the vector $(\kappa_0,\kappa_1,$ $\ldots,\kappa_N)$, $\mathrm{spec}(Q_p)$ denotes the spectrum of the operator $Q_p$.

We are now in the position to state our  main results of this work for SDEs with regular coefficients. The first result is about the estimate of the difference of distributions of the solutions of \eqref{1.1} and \eqref{1.2}.
\begin{mythm}\label{t1}
Let $(X_t,\La_t)$ and $(\wt X_t,\tilde \La_t)$ be the solution of \eqref{1.1} and \eqref{1.2} respectively. Assume $\mathrm{(H1)}$ and $\mathrm{(H2)}$ hold. Then
\begin{equation}\label{a-1}
\begin{split}
  W_2(\mathcal{L}(X_t),\mathcal{L}(\wt X_t))^2&\leq \big(  4\veps^{-1}\!+\!8\big)KC_2(p)^{\frac 1p}\Big(N^{ 2 }t^{  2 }\|Q-\wt Q\|_{\ell_1}\Big)^{\frac 1q}\Psi(t,\veps,\eta_p,K,p),
\end{split}
\end{equation} where $p>1$, $q=p/(p-1)$, $\veps$ and $C_2(p)$ are positive constants, $\eta_p$ is defined by \eqref{eta}, and 
\begin{equation}\label{psi}
\Psi(t,\veps,\eta_p,K,p)=\Big(\!\int_0^t\big[1+(|x_0|^2+2Ks)\e^{(2K+1)s}\big]^p\e^{-(\eta_p-\veps p)(t-s)}\d s\Big)^{\frac 1p}.
\end{equation}
If assume further that
\begin{equation}\label{a-2}
|b(x,i)|^2\leq K,\quad \|\sigma(x,i)\|_{\mathrm{HS}}^2\leq K,\quad x\in\R^d,\ i\in\S,
\end{equation}
then we have a simple estimate:
\begin{equation}\label{a-3}
\begin{split}
&W_2(\mathcal{L}(X_t),\mathcal{L}(\wt X_t))^2\\&\leq (4\veps^{-1}\!+\!8)K C_2(p)^{\frac 1p} \big(N^2 t^2 \|Q-\wt Q\|_{\ell_1}\big)^{\frac 1q}
\Big(\frac{1-\e^{- (\eta_p-\veps p) t}}{\eta_p-\veps p}\Big)^{\frac 1p}.
\end{split}
\end{equation}
\end{mythm}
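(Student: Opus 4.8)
\noindent\emph{Plan of proof.} The plan is to realise $(X_t,\La_t)$ and $(\wt X_t,\tilde\La_t)$ on one probability space and to bound $\E|X_t-\wt X_t|^2$, which dominates $W_2(\mathcal L(X_t),\mathcal L(\wt X_t))^2$. First I would drive \eqref{1.1} and \eqref{1.2} by the same Brownian motion $(W_t)$ and construct $(\La_t)$ and $(\tilde\La_t)$ from a single Poisson random measure through Skorokhod's representation of Markov chains, arranging the jump intervals consecutively so that, starting from the common value $i_0$, the two chains separate only at a rate controlled by $\|Q-\wt Q\|_{\ell_1}$. This produces a coupling of $\mathcal L(X_t)$ and $\mathcal L(\wt X_t)$, so that $W_2(\mathcal L(X_t),\mathcal L(\wt X_t))^2\le\E|X_t-\wt X_t|^2$, together with a preliminary estimate of the form
\[\p(\La_s\neq\tilde\La_s)\le N^2 s\,\|Q-\wt Q\|_{\ell_1},\qquad 0\le s\le t,\]
which I would isolate as a lemma; this is the ``perturbation theory of continuous-time Markov chains over a time interval'' announced in the introduction.

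Next, writing $Z_t=X_t-\wt X_t$, I would apply It\^o's formula to $|Z_t|^2$ and split the drift/diffusion increment according to whether $\La_s=\tilde\La_s$. On $\{\La_s=\tilde\La_s\}$, hypothesis $\mathrm{(H1)}$ bounds the local drift by $\kappa_{\La_s}|Z_s|^2$; on $\{\La_s\neq\tilde\La_s\}$, hypothesis $\mathrm{(H2)}$ together with Young's inequality (with a parameter proportional to $\veps$) bounds it by $\veps|Z_s|^2+(4\veps^{-1}+8)K(1+|X_s|^2+|\wt X_s|^2)\mathbf 1_{\{\La_s\neq\tilde\La_s\}}$, up to the constants appearing in \eqref{a-1}. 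This is a linear differential inequality with random coefficient $\kappa_{\La_s}+\veps$ and forcing supported on $\{\La_s\neq\tilde\La_s\}$; by the variation-of-constants formula and a standard localisation of the resulting stochastic integral this yields
\[\E|Z_t|^2\le(4\veps^{-1}+8)K\int_0^t\e^{\veps(t-s)}\,\E\Big[\exp\!\Big(\int_s^t\kappa_{\La_u}\,\d u\Big)(1+|X_s|^2+|\wt X_s|^2)\,\mathbf 1_{\{\La_s\neq\tilde\La_s\}}\Big]\,\d s.\]

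The core of the argument, and the step I expect to be most delicate, is then to turn the random exponential into a spectral estimate. Conditioning the inner expectation on $\F_s$ and using that $(\La_u)_{u\ge s}$ is, given $\F_s$, a Markov chain started at $\La_s$, the Feynman--Kac formula for Markov chains gives $\E\big[\exp(p\int_s^t\kappa_{\La_u}\,\d u)\mid\F_s\big]=(\e^{(t-s)Q_p}\mathbf 1)_{\La_s}$, which by \eqref{eta} and an elementary bound for matrix exponentials is $\le c\,\e^{-(\eta_p-\veps p)(t-s)}$ (the loss $\veps p$ absorbing the polynomial Jordan-block factor, and $c$ being absorbed into $C_2(p)$). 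Combining this with H\"older's inequality with exponents $p$ and $q$ to detach the indicator $\mathbf 1_{\{\La_s\neq\tilde\La_s\}}$ (which contributes $\p(\La_s\neq\tilde\La_s)^{1/q}$), with the moment estimates $\E[(1+|X_s|^2)^p]\le C_2(p)[1+(|x_0|^2+2Ks)\e^{(2K+1)s}]^p$ and likewise for $\wt X_s$ (which follow from $\mathrm{(H2)}$, It\^o's formula and Gronwall's inequality), and with a second H\"older on the $\d s$-integral together with $\int_0^t\p(\La_s\neq\tilde\La_s)\,\d s\le N^2 t^2\|Q-\wt Q\|_{\ell_1}$, one assembles exactly the factors $(4\veps^{-1}+8)KC_2(p)^{1/p}$, $(N^2t^2\|Q-\wt Q\|_{\ell_1})^{1/q}$ (one power of $t$ from bounding $s\le t$, the other from the length of $[0,t]$) and $\Psi(t,\veps,\eta_p,K,p)$ of \eqref{psi}, which gives \eqref{a-1}. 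The hard part is precisely this bookkeeping: tuning $p,q$ and $\veps$ so that $\exp(\int_s^t\kappa_{\La_u}\,\d u)$ turns into $\e^{tQ_p}$ and the decay rate comes out exactly as $\eta_p-\veps p$; establishing the Markov-chain perturbation bound of the first step via Skorokhod's representation is the other technical point.

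Finally, for the refinement \eqref{a-3}: under the additional assumption \eqref{a-2} the forcing on $\{\La_s\neq\tilde\La_s\}$ in the second step is bounded by $(4\veps^{-1}+8)K\mathbf 1_{\{\La_s\neq\tilde\La_s\}}$, with no factor $|X_s|^2+|\wt X_s|^2$ and no need for moment estimates, so the bracket $[1+(|x_0|^2+2Ks)\e^{(2K+1)s}]$ in $\Psi$ may be replaced by $1$; then $\Psi$ reduces to $\big(\int_0^t\e^{-(\eta_p-\veps p)(t-s)}\,\d s\big)^{1/p}=\big((1-\e^{-(\eta_p-\veps p)t})/(\eta_p-\veps p)\big)^{1/p}$, and \eqref{a-3} follows.
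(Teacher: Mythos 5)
Your architecture coincides with the paper's: the same Skorokhod-representation coupling driven by a single Poisson random measure and a single Brownian motion, the same perturbation lemma (the paper proves $\int_0^t\p(\La_s\neq\tilde\La_s)\,\d s\le N^2t^2\|Q-\wt Q\|_{\ell_1}$, to which your pointwise bound integrates up to a constant), the same It\^o/(H1)/(H2) splitting with Young's inequality producing $4\veps^{-1}+8$, the same variation-of-constants step, and the same spectral estimate $\E\,\e^{p\int_0^t\kappa_{\La_s}\d s}\le C_2(p)\e^{-\eta_p t}$ (which the paper imports from Bardet--Gu\'erin--Malrieu; this, not a moment bound, is where $C_2(p)$ comes from).

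There is, however, a genuine gap at the step where you ``assemble the factors.'' After variation of constants the integrand is a product of three random factors: $\exp\big(\int_s^t\kappa_{\La_u}\,\d u\big)$, $(1+|X_s|^2+|\wt X_s|^2)$, and $\mathbf 1_{\{\La_s\neq\tilde\La_s\}}$. You assign exponent $p$ to the first, exponent $p$ to the second, and exponent $q$ to the indicator; since $\tfrac 2p+\tfrac 1q>1$, this is not a valid H\"older application. Moreover the moment bound you invoke, $\E[(1+|X_s|^2)^p]\le C\,[1+(|x_0|^2+2Ks)\e^{(2K+1)s}]^p$, does not follow from the second-moment Gronwall estimate (Jensen's inequality runs the wrong way); it would require a genuine $L^{2p}$ estimate with $p$-dependent constants. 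The missing idea --- and the paper's resolution --- is to exploit the product structure $\p=\p_1\times\p_2$ and take the expectation $\E_{\p_1}$ over the Brownian motion first, with the switching path $\omega_2$ frozen: Lemma \ref{lem-1} gives $\E_{\p_1}\big[1+|\wt X_s|^2\big](\omega_2)\le 1+(|x_0|^2+2Ks)\e^{(2K+1)s}$ for $\p_2$-a.e.\ $\omega_2$, so after this step the moment factor is deterministic and only a two-factor H\"older (exponents $p$ and $q$) over $\p_2$ is needed to separate $\e^{p\int_s^t(\kappa_{\La_r}+\veps)\d r}$ from the indicator. With that repair the rest of your outline, including the reduction to \eqref{a-3} under \eqref{a-2}, goes through exactly as in the paper.
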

The second result is about the estimate of the difference of distributions of the solutions of \eqref{1.1} and \eqref{1.3}.
\begin{mythm}\label{t2}
Let $(X_t,\La_t)$ and $(\hat X_t,\hat \La_t)$ be the solutions of \eqref{1.1} and \eqref{1.3} respectively. Suppose $\tilde \La_0=\La_0\in E$.  Assume $\mathrm{(H1)}$ and $\mathrm{(H2)}$  hold. Then
\begin{equation}\label{b-1}
\begin{split}
&W_2(\mathcal{L}(X_t),\mathcal{L}(\hat X_t))^2\\
&\leq \big(  4\veps^{-1}\!+\!8\big)KC_2(p)^{\frac 1p}\big(Nt)^{\frac 2q}\Big(\|B\|_{\ell_1}+\|Q_1-\widehat Q\|_{\ell_1}\Big)^{\frac 1q}\Psi(t,\veps,\eta_p,K,p),
\end{split}
\end{equation}  where $p>1$, $q=p/(p-1)$, $\veps$ and $C_2(p)$ are positive constants, $\eta_p$ is defined by \eqref{eta}, and $\Psi(t,\veps,\eta_p,K,p)$ is given by \eqref{psi}.
Assume further that $b$ and $\sigma$ satisfy  \eqref{a-2}, then
\begin{equation}\label{b-2}
\begin{split}
&W_2(\mathcal{L}(X_t),\mathcal{L}(\hat X_t))^2\\
&\leq (4\veps^{-1}\!+\!8)K C_2(p)^{\frac 1p} \big(N t\big)^{\frac 2q}\Big(\|B\|_{\ell_1}+ \|Q_1-\widehat Q\|_{\ell_1}\Big)^{\frac 1q}
\Big(\frac{1-\e^{- (\eta_p-\veps p) t}}{\eta_p-\veps p}\Big)^{\frac 1p}.
\end{split}
\end{equation}
\end{mythm}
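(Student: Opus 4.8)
The plan is to deduce Theorem~\ref{t2} from Theorem~\ref{t1} by realising the $E$-valued chain $(\hat\La_t)$ as an $\S$-valued chain governed by a transition rate matrix that extends $\widehat Q$. Keeping the block notation of \eqref{1.4}, I would set
\[
\bar Q=\begin{pmatrix} Q_0 & A\\ 0 & \widehat Q\end{pmatrix}
\]
on $\S$: the rows indexed by $\{0,1,\ldots,m\}$ are left exactly as in $Q$, the lower-left block $B$ is switched off, and the lower-right block $Q_1$ is replaced by $\widehat Q$. Since every row of $Q$ sums to zero and $\widehat Q$ is conservative on $E$, every row of $\bar Q$ sums to zero; as $\S$ is finite, $\bar Q$ is also totally stable, hence it generates a unique Markov chain $(\bar\La_t)$, the coefficients $b,\sigma$ still satisfy $\mathrm{(H1)}$ and $\mathrm{(H2)}$, and the pair $(\bar X_t,\bar\La_t)$ solving \eqref{1.2} with $\wt Q=\bar Q$ is well defined.

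Because the transitions from $\{0,\ldots,m\}$ into $E$ have been removed, the chain $(\bar\La_t)$ started from $i_0\in E$ never leaves $E$, and its restriction to $E$ is exactly the chain with rate matrix $\widehat Q$; hence $(\bar\La_t)_{t\ge0}$ and $(\hat\La_t)_{t\ge0}$ have the same law. Since these chains are independent of the Brownian motion, and the SDEs \eqref{1.2} and \eqref{1.3} have the same coefficients, the same initial value and pathwise-unique solutions, this gives $\mathcal L(\bar X_t)=\mathcal L(\hat X_t)$ for every $t\ge0$, so that $W_2(\mathcal L(X_t),\mathcal L(\hat X_t))=W_2(\mathcal L(X_t),\mathcal L(\bar X_t))$.

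One then applies Theorem~\ref{t1} to the pair $(X_t,\La_t)$, $(\bar X_t,\bar\La_t)$; note that $\eta_p$ in \eqref{eta} is built from $Q$, which is unchanged, so the quantities $\veps$, $C_2(p)$, $\eta_p$ and $\Psi$ appearing on the right of \eqref{a-1} are precisely those in \eqref{b-1}. It then remains to observe that $\|Q-\bar Q\|_{\ell_1}\le\|B\|_{\ell_1}+\|Q_1-\widehat Q\|_{\ell_1}$: for a row index $i\in\{0,\ldots,m\}$ one has $Q_{ij}=\bar Q_{ij}$ for all $j$, so that row of $Q-\bar Q$ is zero; for $i\in E$ the $i$-th row of $Q-\bar Q$ is the $i$-th row of $B$ along the columns in $\{0,\ldots,m\}$ together with the $i$-th row of $Q_1-\widehat Q$ along the columns in $E$, so its $\ell_1$-length is at most $\|B\|_{\ell_1}+\|Q_1-\widehat Q\|_{\ell_1}$. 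Substituting this bound into \eqref{a-1} and using $(N^2t^2)^{1/q}=(Nt)^{2/q}$ together with $(ab)^{1/q}=a^{1/q}b^{1/q}$ yields exactly \eqref{b-1}; under the additional hypothesis \eqref{a-2}, the same substitution into \eqref{a-3} yields \eqref{b-2}.

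The only genuine subtlety is the choice of $\bar Q$: one must keep the block $A$ inside the rows of $\bar Q$ indexed by $\{0,\ldots,m\}$ — rather than zeroing $A$ out or adjusting that block to make it conservative — so that those rows of $Q-\bar Q$ vanish and $\|Q-\bar Q\|_{\ell_1}$ collapses to exactly $\|B\|_{\ell_1}+\|Q_1-\widehat Q\|_{\ell_1}$; this is harmless because $(\bar\La_t)$ never visits $\{0,\ldots,m\}$. The other point requiring care is the rigorous identification $\mathcal L(\bar X_t)=\mathcal L(\hat X_t)$, which rests on the elementary fact that a Markov chain confined to a closed set coincides in law with the chain generated by the restriction of the rate matrix to that set. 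An alternative route, bypassing Theorem~\ref{t1}, is to couple $(\La_t)$ and $(\hat\La_t)$ directly via a common Poisson random measure (Skorokhod representation) so that they agree until $(\La_t)$ first leaves $E$ or makes a jump at a rate differing from the one prescribed by $\widehat Q$ — an event whose rate, while the two agree inside $E$, is at most $\|B\|_{\ell_1}+\|Q_1-\widehat Q\|_{\ell_1}$ — and then to rerun the It\^o/Gronwall estimate underlying Theorem~\ref{t1}; the reduction above is shorter and self-contained.
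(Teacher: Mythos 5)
Your proposal is correct and follows essentially the same route as the paper: the authors likewise extend $\widehat Q$ to a conservative matrix on $\S$ by retaining the upper blocks of $Q$, zeroing out $B$, and replacing $Q_1$ by $\widehat Q$, so that the extended chain started in $E$ never leaves $E$, coincides with $(\hat\La_t)$, and Theorem \ref{t1} applies with $\|Q-\wt Q\|_{\ell_1}\le\|B\|_{\ell_1}+\|Q_1-\widehat Q\|_{\ell_1}$. The only difference is presentational: the paper writes out the case $E=\S\setminus\{0\}$ and remarks that the general case is identical, whereas you treat general $m$ directly.
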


Next, we consider the stability of the dynamical system $(X_t)$ under the perturbation of the transition rate matrix when the coefficients of the underlying SDE are irregular.  Precisely, let
\begin{equation}\label{c-1}
\d X_t= b(X_t,\La_t)\d t+\sigma(X_t)\d W_t,\quad X_0=x_0\in\R^d, \ \La_0=i_0\in\S,
\end{equation}
where $\sigma:\R^d\ra \R^{d\times d}$ is still Lipschitz continuous,  but $b$ only satisfies some integrability condition. Here, $(\La_t)$ is also a continuous time Markov chain with a conservative and irreducible transition rate matrix $Q=(q_{ij})_{i,j\in\S}$. $(\La_t)$ is assumed to be independent of $(W_t)$. A typical example of the irregular drift $b$ concerned in this work is
\begin{equation}\label{c-2}
b(x,i)=\beta_i\Big\{\sum_{k=1}^\infty \log\Big(1+\frac{1}{|x-k|^2}\Big)\Big\}^{\frac 12}-x,
\end{equation}
where $\beta:\S\ra \R_+$. This drift $b$ is rather singular, whereas we can show that $(X_t)$ is still stable in a suitable sense w.r.t. the perturbation of $Q$ even in this situation. There are lots of researches on SDEs with irregular drifts in the form \eqref{c-2} or in $L^p([0,\infty);L^q(\R^d))$. We refer the readers to the recent works \cite{Wa17,Zh16} and references therein for more details on the motivations and applications.

Similar  to \eqref{1.2} and \eqref{1.3}, we consider the processes $(\wt X_t)$  and $(\hat X_t)$ corresponding to the perturbations $\wt Q=(\tilde q_{ij})_{i,j\in\S}$ and $\hat{Q}=(\hat{q}_{ij})_{i,j\in E}$.
Namely,
\begin{equation}\label{g-1}
\d \wt X_t=b(\wt X_t,\tilde \La_t)\d t+\sigma(\wt X_t)\d W_t, \ \wt X_0=x_0,\ \tilde \La_0=i_0,
\end{equation} where $(\tilde \La_t)$ is associated with $\wt Q$ and is independent of $(W_t)$.
\begin{equation}\label{g-2}
  \d \hat X_t=b(\hat X_t,\hat \La_t)\d t+\sigma(\hat X_t)\d W_t, \ \hat X_0=x_0, \ \hat \La_0=i_0\in E,
\end{equation} where $(\hat \La_t)$ is associated with $\widehat Q$ on the state space $E$ and is independent of $(W_t)$.
We shall measure the difference between the distribution $\mathcal{L}(X_t)$ and $\mathcal{L}(\wt X_t)$ by the Fortet-Mourier distance (also called bounded Lipschitz distance):
\begin{equation}\label{bL-dis}
\begin{aligned}
W_{bL}(\mu,\nu)=\sup\Big\{\int_{\R^d}\phi\,\d \mu-\int_{\R^d}\phi\,\d\nu;\ \|\phi\|_{\mathrm{Lip}}+\|\phi\|_\infty\leq 1\Big\}
\end{aligned}
\end{equation} for two probability measures  $\mu,\,\nu$  on $\R^d$, $\|\phi\|_{\mathrm{Lip}}:=\sup_{x,y,\in \R^d,x\neq y}\frac{|\phi(x)-\phi(y)|}{|x-y|}$. The Fortet-Mourier distance can also characterize the weak convergence of the probability measure space (cf. \cite[Chapter 6]{Vi09a}), and it is closely related to the $L_1$-Wasserstein distance via the Kantorovich-Rubinstein Theorem
(cf. \cite[Theorem 1.14]{Vi}).

To provide a suitable integrability condition on the drift $b$, we need to introduce an auxiliary function $V$ and its associated probability measure $\mu_0$. Let $V\in C^2(\R^d)$, define
\begin{equation}\label{z0}
Z_0(x)=-\sum_{i,j=1}^d\big(a_{ij}(x)\partial_j V(x)\big)e_i,
\end{equation}
where $(a_{ij}(x))=\sigma(x)\sigma^{\ast}(x)$, $\sigma^\ast$ denotes the transpose of $\sigma$ given in \eqref{c-1}, $\{e_i\}_{i=1}^d$ is the canonical orthonormal basis of $\R^d$ and $\partial_j$ is the directional derivative along $e_j$.
Let
\begin{equation}\label{mu0}
\mu_0(\d x)=\e^{-V(x)}\d x.
\end{equation}
Assume that $V$ satisfies:
\begin{itemize}
  \item[($\mathrm{A})$] there exists a $K_0>0$ such that $|Z_0(x)-Z_0(y)|\leq K_0|x-y|$ for all $x,\,y\in\R^d$, and $\mu_0(\R^d)=1$.
\end{itemize}
Let
\begin{equation}\label{z}
 Z(x,i)=b(x,i)-Z_0(x),\quad x\in\R^d,\ i\in\S.
\end{equation}

For the example $b$ in \eqref{c-2}, we can take $V(x)=x^2/2+\log\sqrt{2\pi}$, then
$Z_0(x)=-x$ and $\mu_0(\d x)=\frac{\e^{-x^2/2}}{\sqrt{2\pi}}\d x$. Also, the integrability condition \eqref{con-1} below can be verified by direct calculation for this example. In this part, for $f\in \mathscr{B}(\R^d)$, $\mu_0(f)$ denotes $\int_{\R^d} f(x)\mu_0(\d x)$.
\begin{mythm}\label{t3}
Let $(X_t,\La_t)$ be a solution of \eqref{c-1} and $(\wt X_t,\tilde \La_t)$ a solution of \eqref{g-1}.
Suppose  $V\in C^2(\R^d)$ satisfying condition $\mathrm{(A)}$.
Let $T>0$ be fixed.
Assume that there exists a constant $\eta>2T d$ such that
\begin{equation}\label{con-1}
 \max_{i\in\S}\mu_0\Big(\e^{\eta |\sigma^{-1}(\cdot)Z(\cdot,i)|^2}\Big)<\infty.
\end{equation}
Then
\begin{equation}\label{1.5}
\begin{split}
&W_{bL}(\mathcal{L}(X_t),\mathcal{L}(\wt X_t))\leq C \max\Big\{\|Q-\wt Q\|_{\ell_1}^{\frac 1{2q_0}},\|Q-\wt Q\|_{\ell_1}^{\frac{1}{2q_0\gamma}}\Big\},\quad t\in[0,T],
\end{split}
\end{equation} for some constant $C$ depending on $N, T, x_0, \tau_1, K_0, \gamma, p_0$ and $\max_{i\in\S}\mu_0\Big(\e^{\eta |\sigma^{-1}(\cdot)Z(\cdot,i)|^2}\Big)$, where  $p_0>1$ is a constant satisfying $2p_0^2 Td<\eta$, $q_0=p_0/(p_0-1)$ and  $\gamma>1$ is a constant.
\end{mythm}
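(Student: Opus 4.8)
The plan is to reduce the comparison between $(X_t,\La_t)$ and $(\wt X_t,\tilde\La_t)$ to a comparison between the two chains $(\La_t)$ and $(\tilde\La_t)$ via a Skorokhod-type coupling, and then to control the conditional laws of the diffusions—given the driving chains—using the dimension-free Harnack inequality to absorb the irregularity of $b$. First I would invoke Skorokhod's representation for Markov chains to realize $(\La_t)$ and $(\tilde\La_t)$ on a common probability space from the same Poisson point process, so that, writing $D_t=\int_0^t \mathbf 1_{\{\La_s\neq\tilde\La_s\}}\,\d s$ for the accumulated discrepancy on $[0,t]$, one has the estimate $\E[D_t]\leq C(N,T)\,t\,\|Q-\wt Q\|_{\ell_1}$ for $t\in[0,T]$; this is the time-interval refinement of classical perturbation theory advertised in the introduction and should already be available from the development preceding this theorem (it is the same ingredient that underlies Theorems \ref{t1} and \ref{t2}). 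This gives a small set $G_t=\{\La_s=\tilde\La_s\ \forall s\in[0,t]\}$ with $\p(G_t^c)\leq \E[D_t]/\delta$-type control after a Chebyshev/Markov step.

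On $G_t$ the two SDEs \eqref{c-1} and \eqref{g-1} have the \emph{same} drift coefficient $b(\cdot,\La_s)$ and the same $\sigma$, but different initial-to-time-$t$ behaviour only through the (a.s. null on $G_t$) event; so on $G_t$ one genuinely has $X_s=\wt X_s$ pathwise by strong uniqueness, and the whole contribution to $W_{bL}$ comes from $G_t^c$. Thus I would bound
\[
W_{bL}(\mathcal L(X_t),\mathcal L(\wt X_t))\le \E\big[\mathbf 1_{G_t^c}\,(1\wedge |X_t-\wt X_t|)\big]\le \E\big[\mathbf 1_{G_t^c}\big]^{1/q_0}\,\Big(\E|X_t-\wt X_t|^{p_0}\Big)^{1/p_0}\wedge \p(G_t^c)
\]
by Hölder, so the job splits into (i) the probabilistic smallness $\p(G_t^c)$ and (ii) a uniform-in-perturbation moment bound $\E|X_t-\wt X_t|^{p_0}\le C$. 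For (ii) the irregular drift prevents a naive Gronwall estimate, and this is where condition $\mathrm{(A)}$, the reference measure $\mu_0=\e^{-V}\,\d x$, the decomposition $b=Z_0+Z$ in \eqref{z} and the exponential integrability \eqref{con-1} enter: one writes the density of $\mathcal L(X_t)$ (and of $\mathcal L(\wt X_t)$) with respect to $\mu_0$ via a Girsanov transform that removes $Z(\cdot,\La_s)$, notes that the reference dynamics driven by $Z_0$ alone is Lipschitz (hence has good moments), and uses the dimension-free Harnack inequality for the $Z_0$-diffusion together with \eqref{con-1} to bound the $L^{p_0}$-norm of the Radon–Nikodym derivative; the constraint $2p_0^2 Td<\eta<\eta$, i.e.\ $\eta>2Td$, is exactly what makes the relevant exponential moment finite over $[0,T]$. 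The extra exponent $\gamma>1$ and the two-sided $\max\{\cdot,\cdot\}$ in \eqref{1.5} arise from a further Hölder split needed because the Harnack/Girsanov bound controls $\E|X_t-\wt X_t|^{p_0}$ only after yet another exponential-integrability application, so one pays a power $1/(2q_0\gamma)$ when $\|Q-\wt Q\|_{\ell_1}$ is small and a power $1/(2q_0)$ when it is of order one.

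The main obstacle is step (ii): making the moment bound $\E|X_t-\wt X_t|^{p_0}\le C$ genuinely \emph{uniform} in the perturbation $\wt Q$. On $G_t^c$ the two processes have seen different environments $\La$ and $\tilde\La$, so the naive pathwise difference is not small, and one cannot use one-sided Lipschitz dissipativity as in Theorem \ref{t1}; instead one must bound each of $\E|X_t|^{p_0}$ and $\E|\wt X_t|^{p_0}$ separately by a constant independent of $\wt Q$, which forces the Girsanov/Harnack machinery and its quantitative exponential-integrability input \eqref{con-1} to be applied in a way that does not see the fine structure of the jump rates—only the finiteness of $\S$ (hence the uniform $\max_{i\in\S}$ in \eqref{con-1}) and the fixed horizon $T$. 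Once that uniform moment bound is in hand, combining it with the Skorokhod coupling estimate for $\p(G_t^c)$ and the two Hölder splits yields \eqref{1.5} with the stated dependence of $C$ on $N,T,x_0,\tau_1,K_0,\gamma,p_0$ and $\max_{i\in\S}\mu_0(\e^{\eta|\sigma^{-1}Z|^2})$, and the proof is complete.
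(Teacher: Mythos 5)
Your strategy diverges from the paper's and contains two genuine gaps, both concentrated in the step where you pass from the chain coupling to the diffusions. First, the event estimate: Lemma \ref{lem-2} (the ingredient you correctly identify) controls $\E[D_t]=\int_0^t\p(\La_s\neq\tilde\La_s)\,\d s$, the expected Lebesgue measure of the disagreement set, not $\p(G_t^c)=\p(\exists\, s\le t:\La_s\neq\tilde\La_s)$. Your ``Chebyshev/Markov step'' $\p(G_t^c)\le\E[D_t]/\delta$ is false: on $G_t^c$ the two chains may disagree on a time set of arbitrarily small measure (they can separate and quickly recouple), so $D_t\ge\delta$ does not hold on $G_t^c$, and Markov's inequality only bounds $\p(D_t\ge\delta)$, which is smaller than $\p(G_t^c)$ with an uncontrolled difference. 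A direct bound on $\p(G_t^c)$ is obtainable from the Skorokhod construction (the first separation time dominates an exponential variable with rate of order $N^2\|Q-\wt Q\|_{\ell_1}$), but that is a different estimate from the one the paper proves and you would need to supply it.

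Second, and more seriously, the identification $X_s=\wt X_s$ on $G_t$ ``by strong uniqueness'' is precisely the kind of pathwise statement that the irregularity of $b$ obstructs, and it is the reason the paper's proof never couples $X$ and $\wt X$ pathwise at all. With $b$ merely satisfying \eqref{con-1}, pathwise uniqueness localized to a non-$\mathscr F_0$-measurable event of positive probability is not available off the shelf, and a Gronwall comparison on $G_t$ is impossible (as you yourself note). There is also an internal inconsistency: if your identification were valid, then since the test functions in $W_{bL}$ satisfy $\|\phi\|_\infty\le 1$ you would immediately get $W_{bL}\le 2\,\p(G_t^c)\le C\|Q-\wt Q\|_{\ell_1}$, a linear rate requiring neither the Harnack inequality nor \eqref{con-1}; that the theorem only claims the exponent $1/(2q_0)$ should signal that this route is not available. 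The paper instead compares the two laws without any pathwise coupling of the diffusions: it realizes both $(X_t,\La_t)$ and $(\wt X_t,\tilde\La_t)$ as the \emph{same} reference diffusion $(Y_t)$ driven by $Z_0$ under two Girsanov measures $\q$ and $\wt\q$, bounds $|\E f(X_t)-\E f(\wt X_t)|\le\E\big|\tfrac{\d\q}{\d\p}-\tfrac{\d\wt\q}{\d\p}\big|$, applies $|\e^x-\e^y|\le(\e^x+\e^y)|x-y|$ and H\"older, controls the $L^{p_0}$-norm of the densities by Lemma \ref{lem-2.1} (where $2p_0^2Td<\eta$ enters), and controls the $L^{q_0}$-norm of the difference of the stochastic exponents --- which involves $Z(Y_s,\La_s)-Z(Y_s,\tilde\La_s)$ and hence the indicator $\mathbf 1_{\{\La_s\neq\tilde\La_s\}}$ --- by Burkholder--Davis--Gundy, H\"older and Lemma \ref{lem-2}; the exponents $1/(2q_0)$ and $1/(2q_0\gamma)$ come from the martingale part and the quadratic-variation part respectively. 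You identify all the right ingredients (Skorokhod coupling, Girsanov, Harnack, \eqref{con-1}) but assemble them around a pathwise coupling of $X$ and $\wt X$ that is not justified for irregular drifts.
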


\begin{mythm}\label{t4}
Let $(X_t,\La_t)$ be a solution of \eqref{c-1} and $(\hat X_t,\hat \La_t)$ a solution of \eqref{g-2}. Suppose $V\in C^2(\R^d)$ satisfying condition (A).
Let $T>0$ be fixed.
Assume there exists a constant $\eta>2T d$ such that \eqref{con-1} holds. Suppose \eqref{1.4} holds.
Then
\begin{equation}\label{1.6}
\begin{split}
&W_{bL}(\mathcal{L}(X_t),\mathcal{L}(\hat{X}_t))\\
&\leq C \max\Big\{\big(\|B\|_{\ell_1}+\|Q_1-\widehat Q\|_{\ell_1}\big)^{\frac 1{2q_0}}, \big(\|B\|_{\ell_1}+\|Q_1-\widehat Q\|_{\ell_1}\big)^{\frac{1}{2q_0\gamma}}\Big\},\quad t\in[0,T],
\end{split}
\end{equation} for some constant $C$ depending on $N, T, x_0, \tau_1, K_0, \gamma, p_0$ and $\max_{i\in\S}\mu_0\Big(\e^{\eta |\sigma^{-1}(\cdot)Z(\cdot,i)|^2}\Big)$, where   $p_0>1$ is a constant satisfying $2p_0^2 Td<\eta$, $q_0=p_0/(p_0-1)$ and  $\gamma>1$ is a constant.
\end{mythm}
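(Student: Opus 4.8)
The plan is to run the argument of Theorem~\ref{t3}, replacing only the estimate on the discrepancy of the two driving chains by the one appropriate to a change of state space, exactly as Theorem~\ref{t2} modifies Theorem~\ref{t1}. \emph{Coupling the chains.} Using the block decomposition \eqref{1.4} and Skorokhod's representation, I would realize $(\La_t)$ on $\S$ and $(\hat\La_t)$ on $E$ on one probability space, independent of $(W_t)$ and both issued from $i_0\in E$, by a uniformized Poisson construction in which the chains jump together whenever their rates permit. Starting from a common state $i\in E$, they can separate for the first time only through $(\La_t)$ leaving $E$ --- which happens at rate equal to the $i$-th row sum of $B$ --- or through a discrepancy between the $E\times E$ blocks $Q_1$ and $\widehat Q$; hence, writing $\varrho$ for the first disagreement time,
\[
\mathbb P(\varrho\le t)\ \le\ C_0\,N t\big(\|B\|_{\ell_1}+\|Q_1-\widehat Q\|_{\ell_1}\big),\qquad t\in[0,T].
\]
On $\{\varrho>t\}$ the two chain paths coincide on $[0,t]$, so that $(X_t)$ and $(\hat X_t)$, being governed by the same coefficients and driven by the same $W$, can be coupled so as to coincide on $[0,t]$ on that event (using uniqueness in law for \eqref{c-1}).

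\emph{Passing to the reference diffusion.} As in Theorem~\ref{t3}, introduce $\d Y_t=Z_0(Y_t)\,\d t+\sigma(Y_t)\,\d W_t$ with $Y_0=x_0$, which is well posed since $Z_0$ is Lipschitz by condition (A) and symmetric with respect to $\mu_0$. For a frozen chain path $\lambda$, Girsanov's theorem represents the conditional law of $X_t^\lambda$ as that of $Y_t$ weighted by
\[
R_t^\lambda=\exp\!\Big(\int_0^t\!\big\langle\sigma^{-1}(Y_s)Z(Y_s,\lambda_s),\d W_s\big\rangle-\tfrac12\int_0^t\!|\sigma^{-1}(Y_s)Z(Y_s,\lambda_s)|^2\,\d s\Big),
\]
and the same with $\hat\lambda$ for $\hat X_t^{\hat\lambda}$, the reference process $Y$ being common to both. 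Jensen's inequality in the time variable together with the dimension-free Harnack inequality for the semigroup of $(Y_t)$ --- which controls $\E[g(Y_s)]$ against $\mu_0$-integrals of $g$ with a constant depending on $|x_0|$, $K_0$ and $s$ --- upgrades the single exponential-integrability hypothesis \eqref{con-1} into
\[
\sup_{s\le T}\ \sup_{\lambda}\ \E\big[(R_s^\lambda)^{\theta}\big]<\infty\qquad\text{for every }\theta\text{ with }2\theta^2 Td<\eta,
\]
the supremum over $\lambda$ running over all jump-chain paths in $\S$. This is exactly where $\eta>2Td$ and $2p_0^2Td<\eta$ are used.

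\emph{Assembling.} For any $\phi$ with $\|\phi\|_{\mathrm{Lip}}+\|\phi\|_\infty\le1$, the coupling above annihilates the contribution of $\{\varrho>t\}$ to $\E\phi(X_t)-\E\phi(\hat X_t)$, leaving a difference supported on $\{\varrho\le t\}$. Representing $\mathcal L(X_t)$ and $\mathcal L(\hat X_t)$ through $Y$ by the Girsanov formula, applying H\"older with exponents $(p_0,q_0)$ to detach $\mathbf 1_{\{\varrho\le t\}}$ and with an auxiliary exponent $\gamma>1$ to detach the densities $R_t^\La,R_t^{\hat\La}$ from a high moment of $Y_t$, and then using the uniform density bound of the second step together with $\mathbb P(\varrho\le t)^{1/q_0}\lesssim(\|B\|_{\ell_1}+\|Q_1-\widehat Q\|_{\ell_1})^{1/q_0}$, one obtains
\[
W_{bL}(\mathcal L(X_t),\mathcal L(\hat X_t))\le C\max\Big\{\big(\|B\|_{\ell_1}+\|Q_1-\widehat Q\|_{\ell_1}\big)^{\frac1{2q_0}},\ \big(\|B\|_{\ell_1}+\|Q_1-\widehat Q\|_{\ell_1}\big)^{\frac1{2q_0\gamma}}\Big\},
\]
with $C$ depending on $N,T,x_0,\tau_1,K_0,\gamma,p_0$ and $\max_{i\in\S}\mu_0(\e^{\eta|\sigma^{-1}(\cdot)Z(\cdot,i)|^2})$; the two competing powers are the respective costs of the two H\"older steps, the smaller exponent governing the regime of small perturbations.

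The crux is the uniform density estimate in the second step: because $b(\cdot,i)$ is only required to be integrable in the sense of \eqref{con-1} --- it may be as singular as \eqref{c-2} --- no Gronwall comparison of $(X_t)$ and $(\hat X_t)$ is available, so the whole comparison must be routed through the Lipschitz reference diffusion $Y$, and the delicate point is that the Harnack-based moment bound on $R_t^\lambda$ must hold \emph{simultaneously for all} jump-chain trajectories $\lambda$, not merely for one fixed time-homogeneous drift; this is what fixes the admissible ranges of $p_0$ and $\gamma$. Once that estimate is secured, the chain-coupling part is the exact analogue of the corresponding step in Theorem~\ref{t2} and the remaining manipulations are routine.
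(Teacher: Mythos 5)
Your proposal is correct and, in substance, follows the same route as the paper: the paper proves Theorem \ref{t3} in full (Girsanov representation of $(X_t)$ and its perturbation through the common reference diffusion $Y$, Harnack-inequality moment bounds on the Radon--Nikodym densities via Lemma \ref{lem-2.1}, H\"older with exponents $p_0,q_0$ and the auxiliary $\gamma$, and Lemma \ref{lem-2} for the chains), and then disposes of Theorem \ref{t4} in one sentence. The only place you genuinely diverge is the treatment of the mismatched state spaces. You construct a direct uniformized coupling of $(\La_t)$ on $\S$ with $(\hat\La_t)$ on $E$ and bound the first disagreement time $\varrho$; the paper instead reuses the device from the proof of Theorem \ref{t2}: extend $\widehat Q$ to $\wt Q=\begin{pmatrix}-q_0&\alpha\\ 0&\widehat Q\end{pmatrix}$ on all of $\S$, note that the chain $(\tilde\La_t)$ generated by $\wt Q$ and started in $E$ never leaves $E$, so $\wt X_t=\hat X_t$ a.s., and then apply the Theorem \ref{t3} argument verbatim together with $\|Q-\wt Q\|_{\ell_1}\le\|B\|_{\ell_1}+\|Q_1-\widehat Q\|_{\ell_1}$. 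The extension trick buys the same-state-space estimate of Lemma \ref{lem-2}, namely $\int_0^t\p(\La_s\neq\tilde\La_s)\,\d s\le N^2t^2\|Q-\wt Q\|_{\ell_1}$, with no additional work, whereas your direct coupling obliges you to re-derive an analogous disagreement bound (your $\p(\varrho\le t)$ estimate, which indeed dominates $\p(\La_s\neq\hat\La_s)$ and suffices); both versions feed into the H\"older step identically and produce the same pair of exponents $\tfrac1{2q_0}$ and $\tfrac1{2q_0\gamma}$. Your remark that the Harnack-based bound on the exponential densities must hold uniformly over all jump-chain trajectories is exactly the role played by $\max_{i\in\S}$ in \eqref{con-1} and in \eqref{f-3} of the paper, so no gap there.
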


\section{Proofs of  main results}

\subsection{SDEs with regular coefficients}

Let us first introduce the probability space $(\Omega,\mathscr F, \p)$ used throughout this work.
Let
\[\Omega_1=\big\{\omega\big|\,\omega:[0,\infty)\ra \R^d\ \text{continuous},\ \omega_0=0\big\}, \]
which is endowed with the local uniform convergence topology and the Wiener measure $\p_1$ so that its coordinate process $W(t,\omega)=\omega(t)$, $t\geq 0$, is  a $d$-dimensional Brownian motion.
Put
\[\Omega_2=\big\{\omega\big|\,\omega:[0,\infty)\ra \S\ \text{right continuous with left limits}\big\},\]
endowed with the Skorokhod topology and a probability measure $\p_2$. The Markov chains $(\La_t)$ and $(\tilde \La_t)$ are all constructed in the space $(\Omega_2,\mathscr B(\Omega_2),\p_2)$. Set
\[(\Omega,\mathscr F,\p)=(\Omega_1\times\Omega_2,\mathscr B(\Omega_1)\times\mathscr B(\Omega_2),\p_1\times\p_2).\]
Thus under $\p=\p_1\times\p_2$, $(\La_t)$ and $(\tilde \La_t)$ are independent of the Brownian motion $(W_t)$. Denote by $\E_{\p_1}$ taking the expectation with respect to the probability measure $\p_1$, and similarly $\E_{\p_2}$.

Next, we construct a coupling process $(\La_t, \tilde \La_t)$ such that $(\La_t)$ and $(\tilde \La_t)$ are continuous-time Markov chains with transition rate matrix $Q$ and $\wt Q$ respectively.
Denote $H=\max_{i\in\S}\{q_i,\tilde q_i\}$ and $M=N(N-1)H$.
Let $\xi_k$, $k=1,2,\ldots$, be random variables supported on $[0,M]$ satisfying
$\p_2(\xi_k\in \d x)=\mathbf{m}(\d x)/M$ where $\mathbf{m}(\d x)$ stands for the Lebesgue measure on $[0,M]$. Let $\tau_k$, $k=1,2,\ldots,$ be nonnegative random variables such that $\p_2(\tau_k>t)=\exp(-M t)$, $t\geq 0$. Suppose that
$\{\xi_k\}$ and $\{\tau_k\}$ are mutually independent. Let
\[\zeta_1=\tau_1, \zeta_2=\tau_1+\tau_2,\ldots,\zeta_k=\tau_1+\tau_2+\ldots+\tau_k, \quad k\geq 1,\]
and
\[\mathcal D_{p_1}=\{\zeta_1,\zeta_2,\ldots,\zeta_k,\ldots\}.\]
After constructing such random variables, define
\[p_1(\zeta_k)=\xi_k,\quad k\geq 1,\]
and further define the Poisson random measure
\[N_1((0,t]\times U)=\#\big\{s\in \mathcal D_{p_1};s\leq t,\,p_1(s)\in U\big\}, \ t>0, \,U\in \mathscr B(\R).\]

Construct two families of left-closed, right-open intervals $\{\Gamma_{ij}\}_{i,j\in \S}$ and $\{\wt\Gamma_{ij}\}_{i,j\in\S}$ on the half line in the following manner:
\begin{align*}
  \Gamma_{12}&=[0,q_{12}),\qquad\qquad\quad\  \wt \Gamma_{12}=[0,\tilde q_{12}),\\
  \Gamma_{13}&=[q_{12},q_{12}+q_{13}),\qquad \, \wt \Gamma_{13}=[\tilde q_{12},\tilde q_{12}+\tilde q_{13}),\\
  &\ldots\ldots\\
  \Gamma_{21}&=[q_1,q_1+q_{21}),\qquad\quad \wt \Gamma_{21}=[\tilde q_1,\tilde q_1+\tilde q_{21}),
\end{align*}and so on. For convenience of notation, put $\Gamma_{ii}=\wt \Gamma_{ii}=\emptyset$, and $\Gamma_{ij}=\emptyset$ if $q_{ij}=0$; $\wt \Gamma_{ij}=\emptyset$ if $\tilde q_{ij}=0$. Define functions $h,\,\tilde h:\S\times\R\ra \R$ by
\begin{align*}
  h(i,z)&=\sum_{\ell\in\S}(\ell-i)\mathbf 1_{\Gamma_{i\ell}}(z),\\
  \tilde h(i,z)&=\sum_{\ell\in \S}(\ell-i)\mathbf 1_{\wt \Gamma_{i\ell}}(z).
\end{align*}
Then, according to \cite[Chapter II]{Sk89} or \cite{YZ}, the solution of the SDE
\begin{equation}\label{lam-1}
\d \La_t=\int_{[0,M]}h(\La_{t-}, z)N_1(\d t,\d z),\quad \La_0=i_0,
\end{equation}
is a continuous-time Markov chain with transition rate matrix $Q=(q_{ij})$.
Similarly, the solution of the SDE
\begin{equation}\label{lam-2}
\d \tilde \La_t=\int_{[0,M]}\tilde h(\tilde \La_{t-},z)N_1(\d t,\d z),\quad \tilde \La_0=i_0,
\end{equation} is a continuous-time Markov chain with transition rate matrix $\wt Q=(\tilde q_{ij})$.
Therefore, through the SDEs \eqref{lam-1} and \eqref{lam-2}, we construct the desired coupling process $(\La_t,\tilde \La_t)$. Furthermore,
consider the following SDEs:
\begin{equation}\label{2.1}
\d X_t=b(X_t,\La_t)\d t+\sigma(X_t,\La_t)\d W_t,\quad X_0=x_0,\ \La_0=i_0,
\end{equation}
\begin{equation}\label{2.2}
\d \wt X_t=b(\wt X_t,\tilde \La_t)\d t+\sigma(\wt X_t,\tilde \La_t)\d W_t,\quad \wt X_0=x_0,\ \tilde \La_0=i_0.
\end{equation}
Then, the system $(X_t,\La_t)$ given by \eqref{2.1} and \eqref{lam-1} has the same distribution as the system given in \eqref{1.1}. Similarly, $(\wt X_t,\tilde \La_t)$ given by \eqref{2.2} and \eqref{lam-2} has the same distribution as the system given in  \eqref{1.2}. Under the help of the constructed systems $(X_t,\La_t)$ and $(\wt X_t,\tilde \La_t)$ in this section, we can provide the proof of Theorem \ref{t1}.

\begin{mylem}\label{lem-1} Let $(X_t,\La_t)$, $(\wt X_t,\tilde \La_t)$ be the solution of \eqref{2.1} and \eqref{2.2} respectively with $X_0=\wt X_0=x_0\in\R^d$.
Assume $\mathrm{(H2)}$ holds. Then, for $\p_2$-almost surely $\omega_2\in \Omega_2$,
\begin{equation}\label{elem-1}
\begin{split}
  \E_{\p_1}[|X_t|^2](\omega_2)&\leq (|x_0|^2+2Kt)\e^{(2K+1)t},\\
  \E_{\p_1}[|\wt X_t|^2](\omega_2)&\leq (|x_0|^2+2Kt)\e^{(2K+1)t},\quad t>0.
\end{split}
\end{equation}
\end{mylem}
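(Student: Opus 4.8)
The assertion is a quenched (conditional) second moment bound, so I would fix a sample path $\omega_2\in\Omega_2$ of the switching component once and for all; for $\p_2$-almost every $\omega_2$ the trajectory $t\mapsto\La_t(\omega_2)$ is a right-continuous step function with only finitely many jumps on any bounded interval, the jump rates being bounded by $M$. Consequently \eqref{2.1} becomes, under $\p_1$, a stochastic differential equation with time-dependent coefficients $\bar b(t,x):=b(x,\La_t(\omega_2))$ and $\bar\sigma(t,x):=\sigma(x,\La_t(\omega_2))$ driven by the Brownian motion $W$, and these coefficients inherit from $\mathrm{(H2)}$ the uniform linear growth bounds $|\bar b(t,x)|^2\le K(1+|x|^2)$ and $\hs{\bar\sigma(t,x)}^2\le K(1+|x|^2)$. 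The plan is to apply It\^o's formula to $|X_t|^2$ and then take $\E_{\p_1}$.

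\noindent Applying It\^o's formula gives
\begin{equation*}
\d|X_t|^2=\big(2\la X_t,b(X_t,\La_t)\raa+\hs{\sigma(X_t,\La_t)}^2\big)\d t+2\la X_t,\sigma(X_t,\La_t)\,\d W_t\raa.
\end{equation*}
To handle the stochastic integral and, at the same time, to avoid presupposing finiteness of $\E_{\p_1}|X_t|^2$, I would localize: set $\tau_n=\inf\{t\ge 0:|X_t|\ge n\}$, apply It\^o on $[0,t\wedge\tau_n]$, and take expectations so that the martingale term drops out. Using $2\la x,b\raa\le|x|^2+|b|^2$ together with $\mathrm{(H2)}$ yields the pointwise bound $2\la X_s,b(X_s,\La_s)\raa+\hs{\sigma(X_s,\La_s)}^2\le (2K+1)|X_s|^2+2K$, hence
\begin{equation*}
\E_{\p_1}|X_{t\wedge\tau_n}|^2\le |x_0|^2+\int_0^t\big((2K+1)\E_{\p_1}|X_{s\wedge\tau_n}|^2+2K\big)\d s.
\end{equation*}
Since $X$ is continuous, $|X_{s\wedge\tau_n}|\le n$, so Gronwall's inequality applies to the finite function $s\mapsto\E_{\p_1}|X_{s\wedge\tau_n}|^2$; letting $n\to\infty$ and invoking Fatou's lemma transfers the bound to $\E_{\p_1}|X_t|^2$ (and in particular shows it is finite).

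\noindent Gronwall's inequality then gives $\E_{\p_1}|X_t|^2\le |x_0|^2\e^{(2K+1)t}+2K\int_0^t\e^{(2K+1)(t-s)}\d s$; the integral equals $\frac{\e^{(2K+1)t}-1}{2K+1}$, and the elementary inequality $\frac{\e^{at}-1}{a}=\int_0^t\e^{as}\d s\le t\,\e^{at}$ (with $a=2K+1$) upgrades this to $\E_{\p_1}|X_t|^2\le(|x_0|^2+2Kt)\e^{(2K+1)t}$, as claimed. The estimate for $\wt X_t$ is the same computation with $(\wt X_t,\tilde\La_t)$ in place of $(X_t,\La_t)$, since \eqref{2.2} has the same coefficients and $\tilde\La_\cdot(\omega_2)$ enjoys the same path regularity. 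I do not anticipate a serious obstacle: the only point requiring care is the localization step, which simultaneously justifies the vanishing of the stochastic integral and the a priori finiteness of the second moment; the remainder is a routine Gronwall argument.
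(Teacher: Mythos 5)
Your proposal is correct and follows essentially the same route as the paper: apply It\^o's formula to $|X_t|^2$ for a fixed switching path $\omega_2$, bound the drift term by $(2K+1)|X_t|^2+2K$ via $\mathrm{(H2)}$, take $\E_{\p_1}$, and conclude by Gronwall together with the elementary bound $\frac{\e^{at}-1}{a}\le t\e^{at}$. The localization/Fatou step you include is a standard rigor-filling detail that the paper leaves implicit; it does not change the argument.
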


\begin{proof}
  By It\^o's formula and (H2),
  \begin{align*}
    \d |X_t|^2&=\big[2\la X_t,b(X_t,\La_t)\raa +\|\sigma(X_t,\La_t)\|_{\mathrm{HS}}^2\big]\d t+2\la X_t,\sigma(X_t,\La_t)\d W_t\raa\\
    &\leq \big[|X_t|^2+2K(1+|X_t|^2)\big]\d t+2\la X_t,\sigma(X_t,\La_t)\d W_t\raa.
  \end{align*}
  Taking the expectation w.r.t. $\p_1$ and using   Gronwall's inequality, we obtain
  \begin{equation*}
    \E_{\p_1}[|X_t|^2](\omega_2)\leq (|x_0|^2+2Kt)\e^{(2K+1)t},\quad \text{$\p_2$-a.s.}\  \omega_2.
  \end{equation*}
  Similarly, the estimate on $\E_{\p_1}[|\wt X_t|^2](\omega_2)$ holds.
\end{proof}

\begin{mylem}\label{lem-2}
For the processes $(\La_t)$ and $(\tilde \La_t)$ given in \eqref{lam-1} and \eqref{lam-2} respectively, it holds
\begin{equation}\label{elem}
\int_0^t\p(\La_s\neq \tilde \La_s)\d s\leq N^2 t^2\|Q-\wt Q\|_{\ell_1}.
\end{equation}
\end{mylem}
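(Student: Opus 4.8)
The plan is to exploit that $(\La_t)$ and $(\tilde \La_t)$ are driven by the \emph{same} Poisson random measure $N_1$ on $(0,\infty)\times[0,M]$, whose intensity measure is Lebesgue measure there: since $\La_0=\tilde \La_0=i_0$, the two chains stay equal until the first jump of $N_1$ whose mark falls into the ``mismatch set'' of the common current state. First I would reduce the problem to estimating this decoupling time. Set $T_0=\inf\{s>0:\La_s\neq\tilde \La_s\}$. Then $\{\La_t\neq\tilde \La_t\}\subseteq\{T_0\le t\}$, so $\p(\La_s\neq\tilde \La_s)\le\p(T_0\le s)$, and it suffices to prove $\p(T_0\le s)\le 2N^2 s\,\|Q-\wt Q\|_{\ell_1}$; integrating over $[0,t]$ then gives $\int_0^t\p(\La_s\neq\tilde \La_s)\,\d s\le N^2 t^2\|Q-\wt Q\|_{\ell_1}$.

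For $i\in\S$ set $D_i=\{z\in[0,M]:h(i,z)\neq\tilde h(i,z)\}$, the set of marks that separate the chains when both sit at $i$. From the definitions of $h,\tilde h$ one has $D_i\subseteq\bigcup_{\ell\neq i}(\Gamma_{i\ell}\triangle\wt\Gamma_{i\ell})$, and since each $\Gamma_{i\ell},\wt\Gamma_{i\ell}$ is a half-open interval, $\mathbf m(\Gamma_{i\ell}\triangle\wt\Gamma_{i\ell})$ is bounded by the displacements of its two endpoints. By the way the intervals are stacked, that displacement equals a partial sum of the $|q_{i\ell'}-\tilde q_{i\ell'}|$ inside row $i$ (bounded by $\|Q-\wt Q\|_{\ell_1}$) plus the accumulated shift of the starting point of the $i$-th block, which is $\big|\sum_{j\prec i}(q_j-\tilde q_j)\big|\le N\,\|Q-\wt Q\|_{\ell_1}$. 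Summing the (at most) $N$ contributions yields $\max_{i\in\S}\mathbf m(D_i)\le 2N^2\|Q-\wt Q\|_{\ell_1}=:\bar D$. Pinning down the constant here — that is, absorbing the lower-order terms coming from the row-internal partial sums — is the only genuinely fiddly book-keeping in the argument.

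With $\bar D$ fixed, I would close the estimate by a compensation-formula computation. A decoupling event is a jump time $\zeta_k$ of $N_1$ with $\xi_k\in D_{\La_{\zeta_k-}}$ and $\La_{\zeta_k-}=\tilde \La_{\zeta_k-}$; writing $A_t$ for the number of such jumps in $(0,t]$, one has $\{T_0\le t\}=\{A_t\ge1\}$, hence $\p(T_0\le t)\le\E[A_t]$. The integrand $(s,z)\mapsto\mathbf 1_{D_{\La_{s-}}}(z)\,\mathbf 1_{\{\La_{s-}=\tilde \La_{s-}\}}$ is predictable, since $\La_{s-},\tilde \La_{s-}$ are left-continuous, so the compensation (Campbell) formula for $N_1$ with Lebesgue intensity gives
\[\E[A_t]=\E\int_0^t\!\!\int_{[0,M]}\mathbf 1_{D_{\La_{s-}}}(z)\,\mathbf 1_{\{\La_{s-}=\tilde \La_{s-}\}}\,\d z\,\d s=\E\int_0^t\mathbf m(D_{\La_{s-}})\,\mathbf 1_{\{\La_{s-}=\tilde \La_{s-}\}}\,\d s\le\bar D\,t.\]
Thus $\p(\La_s\neq\tilde \La_s)\le\bar D\,s$ and $\int_0^t\p(\La_s\neq\tilde \La_s)\,\d s\le\tfrac12\bar D\,t^2\le N^2 t^2\|Q-\wt Q\|_{\ell_1}$.

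The probabilistic core — the reduction to $T_0$ and the first-moment bound via compensation — is routine once one recalls that the common driver $N_1$ has Lebesgue intensity on $(0,\infty)\times[0,M]$ and that left limits are predictable. The one place that really needs care is the geometric estimate of $\mathbf m(D_i)$: one must track how the blocks $\Gamma_{i\ell}$ and $\wt\Gamma_{i\ell}$ drift apart, including the cumulative shift inherited from all earlier rows, in order to land on the clean constant appearing in the stated inequality.
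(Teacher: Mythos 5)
Your argument is correct, and it reaches the stated bound by a genuinely different route from the paper on the probabilistic side. Both proofs rest on the same two ingredients: the Skorokhod coupling of $(\La_t,\tilde\La_t)$ through the common Poisson random measure $N_1$ (whose intensity on $(0,\infty)\times[0,M]$ is indeed Lebesgue, since the jump rate $M$ cancels against the mark density $1/M$), and the geometric estimate $\mathbf m(\Gamma_{ij}\Delta\wt\Gamma_{ij})\le 2N\|Q-\wt Q\|_{\ell_1}$, which after summing over the at most $N$ target states gives your $\mathbf m(D_i)\le 2N^2\|Q-\wt Q\|_{\ell_1}$ — exactly the constant the paper uses implicitly in its one-jump computation. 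Where you diverge is in converting this into a bound on $\p(\La_s\neq\tilde\La_s)$: the paper discretizes time with mesh $\delta$, splits on the number of jumps per cell (bounding the two-or-more-jumps event by $C\delta^2$), propagates the one-cell estimate inductively along the grid, and only recovers the clean inequality after letting $\delta\downarrow 0$; you instead introduce the decoupling time $T_0$, observe that $\{\La_s\neq\tilde\La_s\}\subseteq\{T_0\le s\}\subseteq\{A_s\ge 1\}$ because the chains can only separate at an atom of $N_1$ whose mark lands in the mismatch set of the common pre-jump state, and then apply the compensation formula to the predictable integrand $\mathbf 1_{D_{\La_{s-}}}(z)\mathbf 1_{\{\La_{s-}=\tilde\La_{s-}\}}$ to get $\p(T_0\le s)\le \E[A_s]\le 2N^2 s\|Q-\wt Q\|_{\ell_1}$ in one stroke. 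Your version buys a shorter, limit-free argument and the sharper intermediate pointwise bound that is linear in $s$ (the paper only obtains the integrated statement); the paper's discretization, while clumsier, uses nothing beyond elementary estimates on the exponential holding times and so avoids invoking the compensator machinery. Both then integrate the linear-in-$s$ bound to land on $N^2t^2\|Q-\wt Q\|_{\ell_1}$.
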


\begin{proof}
  Let $\Gamma_{ij}\Delta\wt \Gamma_{ij}=\big(\Gamma_{ij}\backslash \wt\Gamma_{ij}\big)\bigcup \big(\wt\Gamma_{ij}\backslash \Gamma_{ij}\big)$. By virtue of the construction of $\Gamma_{ij}$ and $\wt \Gamma_{ij}$, we have
  \begin{align*}
    \mathbf{m}(\Gamma_{ij}\Delta\wt\Gamma_{ij})&\leq \Big|\sum_{k=1}^{i-1} q_k+\sum_{k=1,k\neq i}^{j-1} q_{ik}-\sum_{k=1}^{i-1} \tilde q_k -\sum_{k=1,k\neq i}^{j-1}\tilde q_{ik}\Big|\\
    &\quad+\Big|\sum_{k=1}^{i-1}q_k+\sum_{k=1,k\neq i}^jq_{ik}-\sum_{k=1}^{i-1}\tilde q_k-\sum_{k=1,k\neq i}^{j} \tilde q_{ik}\Big|\\
    &\leq 2(i-1)\|Q-\wt Q\|_{\ell_1}+\|Q-\wt Q\|_{\ell_1}\\
    &\leq 2N\|Q-\wt Q\|_{\ell_1}.
  \end{align*}
  See also \cite{Sh15} for more details on previous calculation.

   For $\delta\in (0,1)$ and $s>0$, let $s_\delta=[\frac{s}{\delta}]$, the integer part of $s/\delta$. Let $N(t)=N_1((0,t]\times \R)$. For every $t\in (0,\delta]$, since $\La_0=\tilde \La_0=i_0$, we have
   \begin{align*}
     \p(\La_t\neq \tilde \La_t)&=\p(\La_t\neq \tilde \La_t, N(t)\geq 1))\\
     &=\p(\La_t\neq \tilde \La_t,N(t)=1)+\p(\La_t\neq \tilde \La_t,N(t)\geq 2).
   \end{align*}
   There is a constant $C>0$ such that
   \begin{equation}\label{l-1}
   \p(N(t)\geq 2)\leq \p(N(\delta)\geq 2)=1-\e^{-M\delta}-M\delta \e^{-M\delta}\leq C\delta^2.
   \end{equation}
   On the other hand,
   \begin{align*}
     \p(\La_t\neq \tilde \La_t,N(t)=1)&=\int_0^t\p(\La_t\neq \tilde \La_t,\tau_1\in \d s,\,\tau_2>t-s)\\
     &=\int_0^t\p\Big(\xi_1\not\in\bigcup_{j\in\S}\big(\Gamma_{i_0j}\bigcap\wt \Gamma_{i_0j}\big),\tau_1\in \d s\Big)\e^{-M(t-s)}\\
     &\leq 2N^2t \e^{-Mt}\|Q-\wt Q\|_{\ell_1}.
   \end{align*}
   Hence,
   \begin{equation}\label{l-2}
   \p(\La_t\neq \tilde \La_t)\leq C\delta^2+2N^2\delta\|Q-\wt Q\|_{\ell_1}, \quad 0<t\leq \delta.
   \end{equation}
   Note that the estimate is independent of the common initial value of $(\La_t)$ and $(\tilde \La_t)$.

   To proceed,
   \begin{align*}
     \p(\La_{2\delta}\neq \tilde \La_{2\delta})&=\p(\La_{2\delta}\neq \tilde \La_{2\delta},\La_\delta=\tilde \La_\delta)+\p(\La_{2\delta}\neq \tilde \La_{2\delta},\La_\delta\neq \tilde \La_\delta)\\
     &\leq \p(\La_{2\delta}\neq \tilde \La_{2\delta}\big|\La_\delta=\tilde \La_\delta)+\p(\La_\delta\neq \tilde \La_\delta).
   \end{align*}
   By the time-homogeneity of $(\La_t,\tilde \La_t)$ and the estimate \eqref{l-2}, it follows that
   \[\p(\La_{2\delta}\neq \tilde \La_{2\delta})\leq 2C\delta^2+4N^2\delta\|Q-\wt Q\|_{\ell_1}.
   \]
   Deduce inductively to yield that, for each $k\geq 2$,
   \begin{equation}\label{l-3}
   \p(\La_{k\delta}\neq \tilde \La_{k\delta})\leq kC\delta^2+2kN^2\delta\|Q-\wt Q\|_{\ell_1}.
   \end{equation}

   By virtue of \eqref{l-2} and \eqref{l-3}, we have that for $t>0$,
   \begin{align*}
     \int_0^t\p(\La_s\neq \tilde \La_s)\d s&=\int_0^t\p(\La_s\neq \tilde \La_s,\La_{s_\delta}= \tilde \La_{s_\delta})\d s+\int_0^t\p(\La_s\neq \tilde \La_s,\La_{s_\delta}\neq \tilde \La_{s_\delta})\d s\\
     &\leq \int_0^t\p(\La_s\neq \tilde \La_s\big|\La_{s_\delta}=\tilde \La_{s_\delta})\p(\La_{s_\delta}=\tilde \La_{s_\delta})\d s
     +\int_0^t\p(\La_{s_\delta}\neq \tilde \La_{s_\delta})\d s\\
     &\leq \int_0^t\p(\La_s\neq \tilde \La_s\big|\La_{s_\delta}=\tilde \La_{s_\delta})\d s+\sum_{k=1}^K\p(\La_{k\delta}\neq \tilde \La_{k\delta})\delta\\
     &\leq C\delta^2 t+2N^2\delta  t\|Q-\wt Q\|_{\ell_1}+\frac{C\delta^3}{2}K(K+1)\\
     &\qquad \quad \ +N^2K(K+1)\delta^2\|Q-\wt Q\|_{\ell_1},
   \end{align*} where $K=\big[\frac{t}{\delta}\big]+1$.
   Letting $\delta\downarrow 0$, we obtain that
   \begin{equation*}\label{l-4}
   \int_0^t\p(\La_s\neq \tilde \La_s)\d s\leq N^2 t^2\|Q-\wt Q\|_{\ell_1},
   \end{equation*} which concludes the proof.
\end{proof}

\begin{rem}
  The perturbation theory of continuous-time Markov chains has been developed in many works; see, e.g.
  \cite{Mit03, Mit04} and references therein. According to this theory, one can get appropriate estimate of the distance between two transition semigroups  by the distance between their corresponding transition rate matrices. Whereas, to control the term $\E\int_0^t\mathbf 1_{\{\La_s\neq \tilde \La_s\}}\d s$ which concerns the behavior of Markov chains during a time interval $[0,t]$ rather than a fixed time $t$,  one has to construct a suitable coupling process. One possible method is to use the optimal coupling for continuous-time Markov chains
  (cf. \cite[Chapter 5]{Chen}). But additional conditions on the generator of the coupling process are needed. However, we do not find an explicit condition in terms of the difference between $Q$ and $\wt Q$, for example, $\|Q-\wt Q\|_{\ell_1}$ used in this work at current stage. Our result shows once again the significant effect of Skorkhod's representation of continuous-time Markov chains which has been applied in \cite{Sh18} to deal with state-dependent regime-switching processes.
\end{rem}

\noindent\textbf{Proof of Theorem \ref{t1}}\
  For simplicity of notation, let $Z_t=X_t-\wt X_t$. Then, due to (H1) and (H2), It\^o's formula yields that
  \begin{align*}
    \d |Z_t|^2&=\big\{2\la Z_t,b(X_t,\La_t)-b(\wt X_t,\tilde \La_t)\raa +\|\sigma(X_t,\La_t)-\sigma(\wt X_t,\tilde \La_t)\|_{\mathrm{HS}}^2\big\}\d t+\d M_t\\
    &\leq \big\{\kappa_{\La_t} |Z_t|^2\!+\!2\la Z_t,b(\wt X_t,\La_t)\!-\!b(\wt X_t,\tilde \La_t)\raa \! +\!2\|\sigma(\wt X_t,\La_t)\!-\!\sigma(\wt X_t,\tilde \La_t)\|_{\mathrm{HS}}^2\big\}\d t\!+\!\d M_t\\
    &\leq \big\{(\kappa_{\La_t}+\veps)|Z_t|^2+\frac 1\veps \big(|b(\wt X_t,\La_t)|+|b(\wt X_t, \tilde \La_t)|\big)^2\mathbf{1}_{\{\La_t\neq \tilde \La_t\}} \\
    & \qquad+4\big(\|\sigma(\wt X_t,\La_t)\|_{\mathrm{HS}}^2+\|\sigma(\wt X_t,\tilde \La_t)\|_{\mathrm{HS}}^2\big)\mathbf{1}_{\{\La_t\neq \tilde \La_t\}}\big\}\d t+\d M_t\\
    &\leq \big\{(\kappa_{\La_t}+\veps)|Z_t|^2+\frac{4K}{\veps}(1+|\wt X_t|^2)\mathbf{1}_{\{\La_t\neq \tilde \La_t\}} +8K(1+|\wt X_t|^2)\mathbf{1}_{\{\La_t\neq \tilde \La_t\}}\big\}\d t+\d M_t
  \end{align*}
  for any $\veps>0$, where $ M_t=\int_0^t2\la Z_s,(\sigma(X_s,\La_s)-\sigma(\wt X_s,\tilde \La_s))\d W_s\raa$ for $t\geq 0$ is a martingale.
  Taking the expectation w.r.t. $\p_1$ on both sides of the previous inequality, we get
  \begin{equation}\label{v1}
  \begin{split}
     \d\,\E_{\p_1}[|Z_t|^2](\omega_2)&\leq \big(  4\veps^{-1}+8\big) K\E_{\p_1}\big[1+|\wt X_t|^2\big](\omega_2)\mathbf 1_{\{\La_t\neq \tilde\La_t\}}(\omega_2)\d t\\
    &\quad + (\kappa_{\La_t}+\veps)(\omega_2)\E_{\p_1}[|Z_t|^2](\omega_2)\d t.
  \end{split}
  \end{equation}
  To proceed, let us recall an elementary inequality.
  Let $u(t)$ be a real-valued differentiable function, $\alpha(t)$ and $\beta(t)$ real-valued integrable functions (not necessary nonnegative). If
  \[u'(t)\leq \alpha(t)+\beta(t) u(t),\]
  then \[u(t)\leq u(0)\e^{\int_0^t\beta(s)\d s}+\int_0^t\alpha(s)\e^{\int_s^t\beta(r)\d r}\d s.\]
  Using this inequality to \eqref{v1}, and invoking the estimate in Lemma \ref{lem-1}, we obtain that
  \begin{align*}
    \E_{\p_1}[|Z_t|^2](\omega_2)&\leq \big(  4\veps^{-1}\!+\!8\big)K\!\int_0^t \!\!\Big(1\!+\!(|x_0|^2\!+\!2Ks)\e^{(2K+1)s}\Big)
    \mathbf{1}_{\{\La_s\neq \tilde \La_s\}}\,
    \e^{\int_s^t (\kappa_{\La_r}\!+\veps)(\omega_2)\d r}\d s.
  \end{align*}
  Taking the expectation w.r.t. $\p_2$ and using H\"older's inequality, we get
  \begin{equation}\label{2.3}
  \begin{split}
    \E|Z_t|^2&\leq \int_0^t\Big\{(4\veps^{-1}+8)K\big[1+(|x_0|^2+2Ks)\e^{(2K+1)s}\big]\\
    &\qquad \cdot\big(\E\mathbf 1_{\{\La_s\neq \tilde \La_s\}}(\omega_2)\big)^{\frac 1q}\big(\E\e^{p\int_s^t(\kappa_{\La_r}+\veps)(\omega_2)\d r}\big)^{\frac 1p}\Big\} \d s
  \end{split}
  \end{equation}
  for $p,\,q>1$ with $1/p+1/q=1$.

  In order to estimate the term $\E\,\e^{p\int_0^t(\kappa_{\La_s}+1)\d s}$, we need the following notation.
  Let
  \[Q_p=Q+p\,\mathrm{diag}(\kappa_0,\kappa_1,\ldots,\kappa_N),\]
  and
  \[\eta_p=-\max\big\{\mathrm{Re}(\gamma);\,\gamma\in \mathrm{spec}(Q_p)\big\}.\]
  According to \cite[Proposition 4.1]{Bar}, for any $p>0$, there exist two positive constants $C_1(p)$ and $C_2(p)$ such that
  \begin{equation}\label{ee}
  C_1(p)\e^{-\eta_p t}\leq \E\,\e^{p\int_0^t \kappa_{\La_s}\d s}\leq C_2(p)\e^{-\eta_p t}, \quad t>0.
  \end{equation}


 %
  The term $\int_0^t\E\mathbf 1_{\{\La_s\neq \tilde\La_s\}}\d s$ is estimated in Lemma \ref{lem-2}.
  Consequently, substituting the estimates \eqref{ee} and \eqref{elem} into \eqref{2.3}, we get
  \begin{equation}\label{2.5}
  \begin{split}
  \E[|Z_t|^2]&\leq  \big(  4\veps^{-1}\!+\!8\big)KC_2(p)^{\frac 1p}\Big(N^{2}t^{2}\|Q-\wt Q\|_{\ell_1}\Big)^{\frac 1q}\\
  &\qquad\cdot  \Big(\!\int_0^t\big[1+(|x_0|^2+2Ks)\e^{(2K+1)s}\big]^p\e^{-(\eta_p-\veps p)(t-s)}\d s\Big)^{\frac 1p}.
  \end{split}
  \end{equation}

  Note that the solutions of \eqref{2.1} and \eqref{2.2} exist uniquely. Then the distribution of $(X_t,\wt X_t)$ on $\R^d\times\R^d$ is a coupling of $\mathcal{L}(X_t)$ and $\mathcal{L}(\wt X_t)$. By the definition of the Wasserstein distance, it follows
  \begin{align*}
    W_2(\mathcal{L}(X_t),\mathcal{L}(\wt X_t))^2&\leq \E[|X_t-\wt X_t|^2]\\
    &\leq \big(  4\veps^{-1}\!+\!8\big)KC_2(p)^{\frac 1p}N^{\frac{2}{q}}t^{\frac 2q}\|Q-\wt Q\|_{\ell_1}^{\frac 1q}\\
  &\qquad\cdot  \Big(\!\int_0^t\big[1+(|x_0|^2+2Ks)\e^{(2K+1)s}\big]^p\e^{-(\eta_p-\veps p)(t-s)}\d s\Big)^{\frac 1p},
  \end{align*}
  which is the desired estimate \eqref{a-1}.

  When $b$ and $\sigma$ are bounded satisfying \eqref{a-2}, we have a simple estimate
  \begin{align*}
    \d |Z_t|^2&\leq \big\{  (\kappa_{\La_t}+\veps)|Z_t|^2+4K(2+\veps^{-1})\mathbf 1_{\{\La_t\neq \tilde \La_t\}}\big\}\d t+\d M_t,
  \end{align*}
  where $M_t=\int_0^t2\la Z_s,(\sigma(X_s,\La_s)-\sigma(\wt X_s,\tilde \La_s))\d W_s\raa$, $t\geq 0$. This yields
  \begin{align*}
    \E|Z_t|^2&\leq (4\veps^{-1}+8)K\Big(\int_0^t\p(\La_s\neq \tilde \La_s)\d s\Big)^{\frac 1q}\Big(\int_0^t\E\e^{p\int_s^t(\kappa_{\La_r}+\veps)\d r}\d s\Big)^{\frac 1p}.
  \end{align*}
  Then,  \eqref{a-3} can be established by  following the same procedure to deduce \eqref{a-1}. \fin

%
%

\noindent\textbf{Proof of Theorem \ref{t2}}\ To emphasize the idea, we give out the proof in the situation $E=\S\backslash\{0\}$.
For the given transition rate matrices $Q=(q_{ij})_{i,j\in\S}$ on $\S$ and $\widehat Q=(\hat q_{ij})_{i,j\in E}$ on $E$,  write $Q$ in the form
\begin{equation}\label{2.6}
Q=\begin{pmatrix}
  -q_0 & \alpha\\
  \beta & Q_1
\end{pmatrix},
\end{equation} where $\alpha=\{q_{0i}; 1\leq i\leq N\}$ and $\beta=\{q_{j0}; 1\leq j\leq N\}$ are the row and column vectors on $E$. Let $(\La_t)$ and $(\widehat \La_t)$ be the Markov chains on $\S$ and $E$ with the transition rate matrices $Q$ and $\hat Q$ respectively.
Consider
\begin{equation}\label{2.7}
\d \hat X_t=b(\hat X_t,\hat \La_t)\d t+\sigma(\hat X_t,\hat \La_t)\d W_t,\quad \hat X_0=x_0,\quad \hat \La_0=i_0\in E.
\end{equation}
In order to employ the method used in Theorem \ref{t1}, we propose the following extension
\begin{equation}\label{ext}
\wt Q=  \begin{pmatrix}
    -q_0& \alpha\\
    0&\hat Q
  \end{pmatrix}.
\end{equation}
It is easy to see that  $\wt Q$ is  conservative. Hence, there is a unique semigroup $(\wt P_t)_{t\geq 0}$ on $\S$ corresponding to the generator $\wt Q$. This $(\tilde \La_t)$ helps us to define another dynamical system $(\wt X_t)$ by the following SDE:
\begin{equation}\label{2.8}
\d \wt X_t=b(\wt X_t,\tilde \La_t)\d t+\sigma(\wt X_t,\tilde \La_t)\d W_t,\quad \wt X_0=x_0,\ \tilde \La_0=i_0\in E.
\end{equation}
Under the conditions (H1) and (H2), the solutions of SDEs \eqref{2.7} and \eqref{2.8} are uniquely determined. Due to the definition of $\wt Q$ in \eqref{ext}, the process $(\tilde \La_t)$ starting from $i_0\in E$ will never reach the point $0$, thus $\tilde \La_t=\hat \La_t$, $t>0$, a.s. when $\tilde \La_0=\hat \La_0=i_0\in E$. As a consequence,
\begin{equation}\label{2.9}
\wt X_t=\hat X_t, \quad t>0, \ \ a.s.
\end{equation}
Moreover, by virtue of \eqref{2.6} and \eqref{ext}, it holds
\begin{equation}\label{e-2.1}
\|Q-\wt Q\|_{\ell_1}\leq \|\beta\|_{\ell_1}+\|Q_1-\widehat Q\|_{\ell_1}.
\end{equation}
Following the procedure of the argument of Theorem \ref{t1},  inserting \eqref{e-2.1} into \eqref{2.5}, we obtain that
\begin{equation}\label{e-2.2}
\begin{split}
  \E[|X_t-\wt X_t|^2]&\leq  \big(  4\veps^{-1}\!+\!8\big)KC_2(p)^{\frac 1p}\big(Nt\big)^{\frac 2q}\Big(\|\beta\|_{\ell_1}+\|Q_1-\widehat Q\|_{\ell_1}\Big)^{\frac 1q}\\
  &\qquad\cdot  \Big(\!\int_0^t\big[1+(|x_0|^2+2Ks)\e^{(2K+1)s}\big]^p\e^{-(\eta_p-\veps p)(t-s)}\d s\Big)^{\frac 1p}.
\end{split}
\end{equation}

Due to \eqref{2.9}, it follows that $\E[|X_t-\hat X_t|^2]=\E[|X_t-\wt X_t|^2]$. According to the definition of the Wasserstein distance, and using the estimate \eqref{e-2.2}, we obtain
\begin{equation}\label{e-2.3}
\begin{split}
W_2(\mathcal{L}(X_t),\mathcal{L}(\hat X_t))^2
&\leq\big(  4\veps^{-1}\!+\!8\big)KC_2(p)^{\frac 1p}\big(Nt\big)^{\frac 2q}\Big(\|\beta\|_{\ell_1}+\|Q_1-\widehat Q\|_{\ell_1}\Big)^{\frac 1q}\\
  &\qquad\cdot  \Big(\!\int_0^t\big[1+(|x_0|^2+2Ks)\e^{(2K+1)s}\big]^p\e^{-(\eta_p-\veps p)(t-s)}\d s\Big)^{\frac 1p}.
\end{split}
\end{equation}

Analogously, if $b$ and $\sigma$ are bounded satisfying \eqref{a-2}, we  have
\begin{equation}\label{e-2.4}
\begin{split}
&W_2(\mathcal{L}(X_t),\mathcal{L}(\hat X_t))^2\leq \E[|X_t-\wt X_t|^2]\\
   &\leq (4\veps^{-1}\!+\!8)K C_2(p)^{\frac 1p} \big(N t\big)^{\frac 2q}\Big(\|\beta\|_{\ell_1}+ \|Q_1-\widehat Q\|_{\ell_1}\Big)^{\frac 1q}
\Big(\frac{1-\e^{- (\eta_p-\veps p) t}}{\eta_p-\veps p}\Big)^{\frac 1p}.
\end{split}
\end{equation}
This completes the proof in the situation   $E=\S\backslash \{0\}$. The general case can be proved in the same way, and the details are  omitted.

\subsection{SDEs with irregular coefficients}
In this part, we consider the regime-switching processes with irregular drifts. Precisely, consider
\begin{equation}\label{e-3.1}
\d X_t=b(X_t,\La_t)\d t+\sigma(X_t)\d W_t,\quad X_0=x_0,\ \La_0=i_0,
\end{equation} where $b:\R^d\times\S\ra \R^d$ and $\sigma:\R^d\ra \R^{d\times d}$. Here, we assume that the diffusion coefficient $\sigma$  satisfies the Lipschitz condition: there exists $K>0$ such that
\begin{equation}\label{sigma}
\|\sigma(x)-\sigma(y)\|_{\mathrm{HS}}^2\leq K|x-y|^2, \ \ \forall\,x,y\in\R^d.
\end{equation} However, the drift $b$ is assumed to satisfy certain integrability condition. Hence, it may be discontinuous. $(\La_t)$ is a continuous time Markov chain on $\S$ with the transition rate matrix $Q=(q_{ij})_{i,j\in\S}$.
Consider the perturbation $\wt Q=(\tilde q_{ij})_{i,j\in\S}$ of $Q$ and its associated Markov chain $(\tilde \La_t)$. Let
\begin{equation}\label{e-3.2}
\d \wt X_t=b(\wt X_t,\tilde \La_t)\d t+\sigma(\wt X_t)\d W_t,\quad \wt X_0=x_0,\ \tilde \La_0=i_0.
\end{equation}
The integrability condition of type \eqref{con-1} is raised by Wang \cite{Wa17} to study the nonexplosion of the solutions of SDEs by  using the dimension-free Harnack inequality. We will use the technique of \cite{Wa17} to analyze the stability of the regime-switching processes. Moreover, according to \cite[Theorem 2.1]{Wa17} and using the technique to construct the regime-switching processes with Markovian switching (cf. e.g. \cite{MY}), it is standard to show the existence and uniqueness of the solutions of SDEs \eqref{e-3.1} and \eqref{e-3.2}.


To proceed, we make some necessary preparations. Let  $(Y_t)$ be a process associated with the reference function $V\in C^2(\R^d)$:
\begin{equation}\label{pro-y}
\d Y_t=Z_0(Y_t)\d t+\sigma \d W(t),\quad Y_0=x_0,
\end{equation}
where the vector field $Z_0$ is defined by \eqref{z0}. Since $Z_0$ is globally Lipschitz continuous by condition (A), there is a unique nonexplosive solution to SDE \eqref{pro-y}. Via the process $(Y_t)$, a new representation for $(X_t)$ and $(\wt X_t)$ can be constructed
with the help of the Girsanov theorem, which is verified by the dimension-free Harnack inequality for $(Y_t)$ under appropriate integrability conditions.

Precisely, rewrite \eqref{pro-y} as
\begin{equation*}
  \d Y_t=b(Y_t,\La_t)\d t+\sigma(Y_t)\d W_t^{(1)},
\end{equation*}
where
\begin{equation}\label{w-1}
  W_t^{(1)}=W_t -\!\int_0^t \!\!\sigma(Y_s)^{-1}Z(Y_s,\La_s)\d s,  \ Z(y,i)= b(y,i)\!-\!Z_0(y),  t>0, y\in\!\R^d, i\in\!\S.
\end{equation}
If Novikov's condition
\begin{equation}\label{Nov-1}\E\e^{\frac 12\int_{0}^{T}|\sigma^{-1}(Y_s)Z(Y_s,\La_s)|^2\d s}<\infty
\end{equation}
holds, then
\begin{equation}\label{mea-1}
  \q:=\exp\Big(\int_0^T\la \sigma^{-1}(Y_s)Z(Y_s,\La_s),\d W_s\raa-\frac 12\int_{0}^{T}\!\!|\sigma^{-1}(Y_s)Z(Y_s,\La_s)|^2\d s\Big)\p
\end{equation}
is a new probability measure. Thus, the Girsanov theorem yields that
$(W_t^{(1)})_{t\in[0,T]}$ is a new Brownian motion under the probability measure $\q$. Note that the mutual independence between $(W_t)$ and $(\La_t)$ has been used herein. Consequently,
the uniqueness of the solution for the SDE \eqref{e-3.1} tells us that $(Y_t,\La_t)_{t\in[0,T]}$ under $\q$ has the same distribution as that of $(X_t,\La_t)_{t\in[0,T]}$ under $\p$. To be more precise, let us show that $(\La_t)$ and $(W_t^{(1)})$ are mutually independent under $\q$. For any bounded measurable functions $f$ on $\S$ and $g$ on $\R^d$, it holds
\begin{equation}\label{bg-1}
\begin{split}
  &\E_{\q}\big[f(\La_t)g(W_t^{(1)})\big]
  =\E_{\p}\Big[\frac{\d \q}{\d \p} f(\La_t)g(W_t^{(1)})\Big]\\
  &=\E_{\p_2}\Big[f(\La_t)\E_{\p_2}\Big[\E_{\p_1}\Big(\frac{\d \q}{\d \p} g(W_t^{(1)})\Big)\Big|\mathscr{F}_T^\La\Big]\Big]\\
  &=\E_{\p_2}\big[f(\La_t)\E_{\p_1}\big[g(W_t)\big]\big]=\E_{\p_2}\big[f(\La_t)\big]\E_{\p}\big[g(W_t)\big]\\
  &=\E_{\p}\big[f(\La_t)\big]\E_{\q}\big[g(W_t^{(1)})\big],
\end{split}
\end{equation}
where $\mathscr F_t^{\La}$ denotes the $\sigma$-field generated by the process $(\La_s)$ up to time $t$, and $\frac{\d \q}{\d \p}$ denotes the Radon-Nikodym derivative. Applying again the Grisanov theorem, we have $\dis \E_{\p_1}\Big(\frac{\d \q}{\d \p}\Big)=1$ and
\begin{align*}
  \E_{\q}\big[f(\La_t)\big]&=\E_{\p}\Big[f(\La_t)\frac{\d \q}{\d \p}\Big]\\
  &= \E_{\p_2}\Big[f(\La_t)\E_{\p_2}\Big[\E_{\p_1}\Big(\frac{\d \q}{\d \p}\Big)\Big|\mathscr{F}_T^\La\Big]\Big]\\
  &=\E_{\p_2}\big[f(\La_t)\big]=\E_{\p}\big[f(\La_t)\big].
\end{align*}
Combining this with the previous equality \eqref{bg-1}, we have
\[\E_{\q}\big[f(\La_t)g(W_t^{(1)})\big]=\E_{\q}\big[f(\La_t)\big]\E_{\q}\big[g(W_t^{(1)})\big],\]
and hence $(\La_t)$ and $(W_t^{(1)})$ are mutually independent.

Analogously, rewrite $(Y_t)$ as
\begin{equation*}
\d Y_t=b(Y_t,\tilde \La_t)\d t+\sigma(Y_t)\d \wt W_t,
\end{equation*}
where
\begin{equation}\label{w-2}
  \wt W_t=W_t-\int_0^t\sigma(Y_s)^{-1}Z(Y_s,\tilde \La_s)\d s.
\end{equation}
If Novikov's condition
\begin{equation}\label{Nov-2}\E\e^{\frac 12\int_{0}^{T}|\sigma^{-1}(Y_s)Z(Y_s,\tilde \La_s)|^2\d s}<\infty
\end{equation}
holds, then
\begin{equation}\label{mea-1}
  \wt\q:=\exp\Big(\int_0^T\la \sigma^{-1}(Y_s)Z(Y_s,\tilde \La_s),\d W_s\raa-\frac 12\int_{0}^{T}\!\!|\sigma^{-1}(Y_s)Z(Y_s,\tilde \La_s)|^2\d s\Big)\p
\end{equation}
is a new probability measure. Moreover, $(Y_t,\tilde \La_t)_{t\in[0,T]}$ under $\wt \q$ has the same distribution as that of $(\wt X_t,\tilde \La_t)$ under $\p$.

\begin{mylem}\label{lem-2.1}
Let $G:\R^d\times\S\ra \R_+$ be a measurable function and $\beta>0$ be a constant. Let $T>0$ be fixed.
\begin{itemize}
  \item[$\mathrm{(i)}$] If there exists a constant $\xi>d$ such that $\max_{i\in\S}\mu_0\big(G^\xi(\cdot,i)\big)<\infty$, then
      \begin{equation}\label{e-3.3}
      \E\Big[\int_0^TG(Y_s,\La_s)\d s\Big]\leq C\max_{i\in\S}\mu_0\big(G^\xi(\cdot,i)\big)^{\frac{1}{\xi}}<\infty
      \end{equation}
      for some constant $C=C(T,\xi,K_0)>0$.
  \item[$\mathrm{(ii)}$] If there exists a constant $\eta$ such that $\eta>\beta T d$ and $\max_{i\in\S}\mu_0\big(\e^{\eta G(\cdot,i)}\big)<\infty$,
      then
      \begin{equation}\label{e-3.4}
      \E\Big[\e^{\beta\int_0^TG(Y_s,\La_s)\d s}\Big]<\infty.
      \end{equation}
\end{itemize}
\end{mylem}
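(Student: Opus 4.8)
The plan is to strip off the Markov-chain component, reduce both statements to a single smoothing estimate for the reference diffusion $(Y_t)$, extract that estimate from the dimension-free Harnack inequality together with the invariance of $\mu_0$, and then obtain (ii) from (i) by Jensen's inequality. Since $(Y_t)$ is a functional of $(W_t)$ only and $(W_t)$ is independent of $(\La_t)$, the processes $(Y_t)$ and $(\La_t)$ are independent; writing $P_s^Y$ for the Markov semigroup of $(Y_t)$ and using $\sum_{i\in\S}\p(\La_s=i)=1$, one gets for every nonnegative $F$ on $\R^d\times\S$
\[
\E\big[F(Y_s,\La_s)\big]=\sum_{i\in\S}\p(\La_s=i)\big(P_s^YF(\cdot,i)\big)(x_0)\le\max_{i\in\S}\big(P_s^YF(\cdot,i)\big)(x_0),
\]
so both (i) and (ii) reduce to estimating $(P_s^Yf)(x_0)$ for one nonnegative $f$ and then integrating (resp. exponentiating) in $s$. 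Recall that under condition (A) the drift $Z_0$ is globally Lipschitz, $\sigma$ is Lipschitz and non-degenerate, and $\mu_0=\e^{-V}\d x$ is an invariant probability measure of $(Y_t)$; hence, by \cite{Wa17} and the references therein, $(Y_t)$ satisfies the dimension-free power Harnack inequality
\[
\big(P_s^Yf(x)\big)^{\xi}\le\big(P_s^Yf^{\xi}(y)\big)\,\e^{\Phi_\xi(s)|x-y|^2},\qquad \xi>1,\ f\ge0,\ x,y\in\R^d,
\]
with $\Phi_\xi(s)$ of the order $\tfrac{\xi K_0}{2(\xi-1)(\e^{2K_0s}-1)}$, so that $\Phi_\xi(s)$ is bounded for $s$ away from $0$ while $\Phi_\xi(s)\,s$ stays bounded as $s\downarrow0$.

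The core of the argument is the smoothing bound: for $\xi>d$ (in fact $\xi>d/2$ already suffices) there is a locally integrable $s\mapsto C_\xi(s)$, depending only on $T,\xi,K_0$ and the fixed data $x_0,\sigma$, such that $P_s^Yf(x_0)\le C_\xi(s)\,\mu_0(f^{\xi})^{1/\xi}$ whenever $f\ge0$ and $\mu_0(f^{\xi})<\infty$. To prove it, fix $x=x_0$ in the Harnack inequality and integrate in $y$ over the ball $B(x_0,r)$ against $\mu_0$; using the invariance $\int_{\R^d}P_s^Yf^{\xi}\,\d\mu_0=\mu_0(f^{\xi})$ one gets
\[
\big(P_s^Yf(x_0)\big)^{\xi}\mu_0\big(B(x_0,r)\big)\le\int_{B(x_0,r)}\big(P_s^Yf^{\xi}(y)\big)\e^{\Phi_\xi(s)|x_0-y|^2}\mu_0(\d y)\le\e^{\Phi_\xi(s)r^2}\mu_0(f^{\xi}),
\]
i.e. $P_s^Yf(x_0)\le\big(\mu_0(B(x_0,r))^{-1}\e^{\Phi_\xi(s)r^2}\big)^{1/\xi}\mu_0(f^{\xi})^{1/\xi}$. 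For $s$ bounded away from $0$ one takes $r$ fixed: $\Phi_\xi(s)$ is then bounded and $\mu_0(B(x_0,r))>0$, so the constant is finite; for $s\downarrow0$, where $\Phi_\xi(s)$ blows up, one takes $r=\sqrt s$, so $\Phi_\xi(s)r^2=\Phi_\xi(s)s$ stays bounded while, $\e^{-V}$ being continuous and hence bounded below near $x_0$, $\mu_0(B(x_0,\sqrt s))\ge c\,s^{d/2}$; this yields $P_s^Yf(x_0)\le C\,s^{-d/(2\xi)}\mu_0(f^{\xi})^{1/\xi}$, and $s^{-d/(2\xi)}$ is integrable at $0$ exactly because $\xi>d/2$. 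Note that, thanks to the invariance of $\mu_0$, no tail or growth assumption on $V$ beyond condition (A) enters. This step is the only place where genuine work is needed, and it is precisely the short-time regime $s\downarrow0$ (together with the local volume bound $\mu_0(B(x_0,\sqrt s))\gtrsim s^{d/2}$) that forces the hypothesis on $\xi$.

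It remains to assemble. For (i), by the decoupling identity and the smoothing bound,
\[
\E\Big[\int_0^TG(Y_s,\La_s)\,\d s\Big]\le\int_0^T\max_{i\in\S}\big(P_s^YG(\cdot,i)\big)(x_0)\,\d s\le\Big(\int_0^TC_\xi(s)\,\d s\Big)\max_{i\in\S}\mu_0\big(G^{\xi}(\cdot,i)\big)^{1/\xi},
\]
which is \eqref{e-3.3}. For (ii), apply Jensen's inequality to the probability measure $T^{-1}\,\d s$ on $[0,T]$ to get $\e^{\beta\int_0^TG(Y_s,\La_s)\d s}\le T^{-1}\int_0^T\e^{\beta T\,G(Y_s,\La_s)}\d s$, take expectations, use the decoupling identity, and apply the smoothing bound to the nonnegative function $\e^{\beta T\,G(\cdot,i)}$ with exponent $\xi:=\eta/(\beta T)$, which exceeds $d$ precisely because $\eta>\beta Td$. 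Since $\big(\e^{\beta T\,G(\cdot,i)}\big)^{\xi}=\e^{\eta G(\cdot,i)}$, this gives
\[
\E\Big[\e^{\beta\int_0^TG(Y_s,\La_s)\d s}\Big]\le\frac1T\Big(\int_0^TC_\xi(s)\,\d s\Big)\max_{i\in\S}\mu_0\big(\e^{\eta G(\cdot,i)}\big)^{1/\xi}<\infty,
\]
which is \eqref{e-3.4}. Apart from the one-diffusion estimate above, everything here is routine bookkeeping.
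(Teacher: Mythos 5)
Your proof is correct and follows essentially the same route as the paper's: the dimension-free Harnack inequality for the reference semigroup, integrated over a small ball against the invariant measure $\mu_0$, a local volume lower bound $\mu_0(B(x_0,r))\gtrsim r^d$ to produce an integrable singularity at $s=0$, and Jensen's inequality to reduce (ii) to the same smoothing estimate applied to $\e^{\beta T G}$ with exponent $\eta/(\beta T)$. The differences are cosmetic: you choose the radius $\sqrt{s}$ where the paper takes $\sqrt{1-\e^{-K_0 s}}$, and your bookkeeping shows $\xi>d/2$ already suffices, a slight refinement the paper does not exploit.
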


\begin{proof}
  We first prove $\mathrm{(ii)}$, then $\mathrm{(i)}$ follows easily from the derivation of $\mathrm{(ii)}$.
  Let $P_t^0$ denote the semigroup corresponding to the process $(Y(t))$ defined by \eqref{pro-y} with initial value $Y(0)=x$. Hence, the semigroup $P_t^0$ is symmetric w.r.t.\!\! $\mu_0$. Since $V $ satisfies condition (A), according to
  \cite[Theorem 1.1]{Wa11}, for $p>1$, the following Harnack inequality holds:
  \begin{equation}\label{3.2}
    \Big(P_t^0f(x)\Big)^p\leq P_t^0f^p(y)\exp\Big[\frac{K_0\sqrt{p}}{\sqrt{p}-1}\cdot\frac{|x-y|^2}{1-\e^{-K_0t}}\Big],\quad \forall\,f\in \mathscr B_b^+(\R^d).
  \end{equation}
  Applying the Harnack inequality \eqref{3.2} and the mutual independence between $(\La_t)$ and $(W_t)$, we get for any $\gamma>0$ and $K>0$
  \begin{align*}&\Big\{\E\Big[ \e^{\gamma G(Y_t,\La_t)\wedge K}\Big|\mathscr F_t^{\La}\Big]\Big\}^p=\Big\{P_t^0\e^{\gamma G(\cdot,\La_t)\wedge K}\Big\}^p(x)\\
  &\leq \Big\{P_t^0\e^{\gamma p G( \cdot,\La_t)\wedge K}\Big\}(y)\exp\Big[\frac{K_0\sqrt{p}}{\sqrt{p}-1}\cdot\frac{|x-y|^2}{1-\e^{-K_0t}}\Big].
  \end{align*}
  Passing to the limit as $K\ra +\infty$, it follows from Fatou's lemma that
  \begin{equation}\label{3.3}
  \Big\{P_t^0\e^{\gamma G(\cdot,\La_t)}\Big\}^p(x)\leq  \Big\{P_t^0\e^{\gamma pG(\cdot,\La_t)}\Big\}(y)\exp\Big[\frac{K_0\sqrt{p}}{\sqrt{p}-1}\cdot\frac{|x-y|^2}{1-\e^{-K_0t}}\Big].
  \end{equation}
  Denote $B(x,r)=\{y\in\R^d; |y-x|\leq r\}$ for $r>0$, $x\in\R^d$. Integrating both sides of \eqref{3.3} w.r.t. $\mu_0$ over the set $B(x, \sqrt{1-\e^{-K_0t}})$, we obtain
  \begin{equation}\label{3.4}
  \begin{split}
  &\Big\{P_t^0\e^{\gamma G(\cdot,\La_t)}(x)\Big\}^{p}\mu_0\big(B\big(x,\sqrt{1-\e^{-K_0t}}\big)\big)\\
  &\leq \int_{B\big(x,\sqrt{1-\e^{-K_0t}}\big)}\big\{P_t^0\e^{\gamma p G(\cdot,\La_t)}\big\}(y)\e^{\frac{K_0 \sqrt{p}}{\sqrt{p}-1} \cdot \frac{|x-y|^2}{1-\e^{-K_0t}}}\mu_0(\d y)\\
  &\leq \int_{B\big(x,\sqrt{1-\e^{-K_0t}}\big)}\big\{P_t^0\e^{\gamma p G(\cdot,\La_t)}\big\}(y)\e^{\frac{K_0\sqrt{p}}{\sqrt{p}-1} }\mu_0(\d y)\\
  &\leq \e^{\frac{K_0\sqrt{p}}{\sqrt{p}-1} }\mu_0(\e^{\gamma p G(\cdot,\La_t)}).
  \end{split}
  \end{equation}
  Since $\mu_0$ has strictly positive and continuous density $\e^{-V}$ w.r.t. the Lebesgue measure, there exists $\Gamma\in C(\R^d;(0,\infty))$ such that $\mu_0(B(x,t))\geq \Gamma(x) t^d$ for $t\in (0,1]$ and $x\in\R^d$. Invoking \eqref{3.4}, we obtain
  \begin{equation}\label{3.5}
  \E\e^{\gamma G(Y_t,\La_t)}\leq \Gamma(x)^{-\frac 1p}\e^{\frac{K_0}{p-\sqrt p}}\max_{i\in\S}\mu_0\Big(\e^{\gamma p G(\cdot,i)}\Big)^{\frac 1p}\frac{1}{\big(1-\e^{-K_0t}\big)^{  d/p}}, \ \quad t\in(0,T].
  \end{equation}
  Combining this with Jensen's inequality, one has
  \begin{equation}\label{3.6}
  \begin{aligned}
    \E\Big[\e^{\beta\int_0^TG( Y_t,\La_t)\d t}\Big]&\leq \frac 1T\int_0^T \E\Big[\e^{\beta TG( Y_t,\La_t)}\Big]\d t\\
    &\leq \frac{C}{\Gamma(x)^{1/p}}\max_{i\in\S}\mu_0\Big(\e^{\beta T pG( \cdot,i)}\Big)^{1/p}\int_0^T\frac{1}{(1-\e^{-K_0t})^{d/p}} \d t,
  \end{aligned}
  \end{equation} where $C=C(p,T,K_0)$ is a constant and $x$ is the initial value of $(Y_t)$.
  Taking $d<p<\frac{\eta}{\beta T }$ in \eqref{3.6},   it follows from the assumed condition in (ii) that
  \[\E\Big[\e^{\beta\int_0^TG( Y_t,\La_t)\d t}\Big]<\infty.\]
 In order to establish \eqref{e-3.3}, noticing $\xi>d$,  we obtain from \eqref{3.5} that
   \begin{equation}\label{n-1}
     \E[G( Y_t,\La_t)]\leq \frac{\e^{\frac{K_0}{\xi-\sqrt{\xi}} }\max_{i\in\S}\mu_0(G^{\xi}(\cdot,i))^{\frac 1\xi}}
     {\Gamma(x)^{\frac{1}{\xi}}(1-\e^{-K_0t})^{\frac{d}{\xi}}},\quad t\in (0,T],
   \end{equation}
   and hence
   \begin{align*}
     \E\Big[\int_0^T G( Y_t,\La_t)\d t\Big]\leq \frac{\e^{\frac{K_0}{\xi-\sqrt{\xi}} }}{\Gamma(x)^{\frac{1}{\xi}}}
     \Big(\int_0^T\!\!\frac{1}{(1-\e^{-K_0 t})^{ d/\xi }}\d t\Big)\max_{i\in\S}\mu_0(G^{\xi}( \cdot,i))^{\frac 1\xi}<\infty.
   \end{align*}
   The proof is complete.
\end{proof}

\noindent\textbf{Proof of Theorem \ref{t3} } For every Markov chain $(\La_t)$ with transition rate matrix $Q$, there is a unique strong solution to SDE \eqref{c-1} under the conditions imposed in this theorem, which of course implies the weak uniqueness of the solution to SDE \eqref{c-1}. Similarly, weak uniqueness holds for SDE \eqref{g-1}. In this proof, let $(\La_t)$ be the Markov chain given by \eqref{lam-1}, and $(\tilde \La_t)$ be given by \eqref{lam-2}. All the results established in beginning of this section still hold for this special construction of Markov chains. We shall this coupling process $(\La_t,\tilde \La_t)$ to estimate $\E\int_0^T\mathbf 1_{\{\La_s\neq \tilde \La_s\}}\d s$ using Lemma \ref{lem-2} in the following argument.

 By Lemma \ref{lem-2.1}, Novikov's conditions \eqref{Nov-1} and \eqref{Nov-2} are verified under the assumption of this theorem. Therefore, $(X_t,\La_t)_{t\in [0,T]}$ and $(\wt X_t,\tilde \La_t)_{t\in[0,T]}$ can be represented in terms of $(Y_t,\La_t)_{t\in [0,T]}$ and $(Y_t,\tilde \La_t)_{t\in [0,T]}$. Denote the initial value of $(Y_t)$ by $x_0$. It follows that for any measurable $f$ with $\|f\|_{\mathrm{Lip}}+\|f\|_\infty\leq 1$, and any $t\in[0,T]$,
\begin{equation}\label{f-1}
\begin{split}
  |\E f(X_t)-\E f(\wt X_t)|&=\big|\E_{\q} f(Y_t)-\E_{\wt \q}f(Y_t)\big|\\
                          &=\Big|\E\Big[\Big(\frac{\d \q}{\d \p}-\frac{\d \wt \q}{\d \p}\Big)f(Y_t)\Big]\Big| \leq \E\Big|\frac{\d \q}{\d \p}-\frac{\d \wt \q}{\d \p}\Big|.
\end{split}
\end{equation}
Setting
\[M_t=\int_0^t\la\sigma^{-1}(Y_s)Z(Y_s,\La_s),\d W_s\raa,\quad \wt M_t=\int_0^t\la\sigma^{-1}(Y_s)Z(Y_s,\tilde \La_s),\d W_s\raa,\]
and
\[\la M\raa_t=\int_0^t|\sigma^{-1}(Y_s)Z(Y_s,\La_s)|^2\d s,\quad \la \wt M\raa_t=\int_0^t|\sigma^{-1}(Y_s)Z(Y_s,\tilde \La_s)|^2\d s
\] for $t\in[0,T]$, by the inequality $|\e^x-\e^y|\leq (\e^x+\e^y)|x-y|$ for all $x,\,y\in\R$, we obtain that
\begin{equation}\label{f-2}
  \begin{split}
    &|\E f(X_t)-\E f(\wt X_t)|\\
    &\leq \E\Big[\Big(\frac{\d \q}{\d \p}+\frac{\d \wt \q}{\d \p}\Big)^p\Big]^{\frac 1p}\E\big[|M_T-\wt M_T-\frac 12 \la M\raa_T+\frac 12 \la \wt M\raa_T|^q\big]^{\frac 1q}
  \end{split}
\end{equation} for $p,\,q>1$ with $1/p+1/q=1$.

For the first term in \eqref{f-2}, since $\eta>2Td$, we can choose $p=p_0>1$ such that $q_0=p_0/(p_0-1)>2$ and $2p_0^2Td<\eta$.
\begin{align*}
  \E\Big[\Big(\frac{\d \q}{\d \p}\Big)^{p_0}\Big]&=\E\big[\exp\big(p_0M_T-\frac{p_0}{2}\la M\raa_T\big)\big]\\
  &\leq \E\big[\exp(2p_0M_T-2p_0^2\la M\raa_T)\big]^{\frac 12}\E\big[\exp(p_0(2p_0-1)\la M\raa_T)\big]^{\frac 12}.
\end{align*}
According to Lemma \ref{lem-1},
\[\E\big[\e^{2p_0^2\la M\raa_T}\big]<\infty,\quad \E\big[\e^{p_0(2p_0-1)\la M\raa_T}\big]<\infty.\]
Hence, $t\mapsto \exp\big(2p_0M_t-2p_0^2\la M\raa_t\big)$ is an exponential martingale for $t\in[0,T]$ and
\begin{equation}\label{f-3}
\E\Big[\Big(\frac{\d \q}{\d \p}\Big)^{p_0}\Big]\leq \frac{C}{\Gamma(x_0)^{\frac 1{  p_1}}}\max_{i\in\S}\mu_0\Big(\e^{\eta |\sigma^{-1}(\cdot)Z(\cdot,i)|^2}\Big)^{\frac{1}{  p_1}}\!\int_0^T\!\!\frac{1}{(1\!-\!\e^{-K_0t})^{\frac d{ p_1}}}\d t<\infty,
\end{equation} where $p_1>d$ satisfies $2p_0^2p_1T<\eta$, and  $C=C(p_1,T,K_0)$.


We proceed to estimate the second term in \eqref{f-2}. We shall estimate $\E[|M_T-\wt M_T|^{q_0}]$ and $\E[|\la M\raa_T-\la \wt M\raa_T|^{q_0}]$ separately. Since $q_0>2$, it follows from Burkholder-Davis-Gundy's inequality and Jensen's inequality that
\begin{align*}
  &\E[|M_T-\wt M_T|^{q_0}]\\
  &\leq C_{q_0}\E\Big[\Big(\int_0^T|\sigma^{-1}(Y_s)(Z(Y_s,\La_s)-Z(Y_s,\tilde\La_s))|^2\d s\Big)^{\frac{q_0}2}\Big]\\
  &\leq C_{q_0} T^{\frac {q_0}2-1}\E\Big[\int_0^T|\sigma^{-1}(Y_s)(Z(Y_s,\La_s)-Z(Y_s,\tilde\La_s))|^{q_0}\d s\Big]\\
  &=C_{q_0}T^{\frac {q_0}2-1}\E\Big[\int_0^T|\sigma^{-1}(Y_s)(Z(Y_s,\La_s)-Z(Y_s,\tilde\La_s))|^{q_0}
  \mathbf{1}_{\{\La_s\neq \tilde \La_s\}}\d s\Big]\\
  &\leq C_{q_0}T^{\frac {q_0}2-1}\int_0^T\E\Big[|\sigma^{-1}(Y_s)(Z(Y_s,\La_s)-Z(Y_s,\tilde\La_s))|^{2q_0}\Big]^\frac 12\p\big( \La_s\neq \tilde \La_s \big)^{\frac 12}\d s\\
  &\leq C_{q_0}T^{\frac{q_0}2\!-\!1}\Big(\!\int_0^T\!\!\!\E\Big[|\sigma^{-\!1}(Y_s)
    (Z(Y_s,\La_s)\!-\!Z(Y_s,\tilde\La_s))|^{2q_0}\Big]\d s\Big)^{\frac 12}  \Big(\!\int_0^T\!\!\!\p(\La_s\!\neq\! \tilde \La_s)\d s\Big)^{\frac 12}.
\end{align*}
By \eqref{n-1} of Lemma \ref{lem-1},
\begin{equation}\label{f-4}
\begin{aligned}
  &\E\Big[ \big|\sigma^{-1}(Y_s)(Z(Y_s,\La_s)-Z(Y_s,\tilde \La_s))\big|^{2q_0}\Big]\\
  &\leq 2^{2q_0-1}\Big(\E\Big[\big|\sigma^{-1}(Y_s)Z(Y_s,\La_s)\big|^{2q_0}\Big]
  +\E\Big[\big|\sigma^{-1}(Y_s)Z(Y_s,\La_s)\big|^{2q_0}\Big]\Big)\\
  &\leq 2^{2q_0}\e^{\frac{K_0}{\xi-\sqrt{\xi}}}\frac{\max_{i\in\S}
  \mu_0\Big(|\sigma^{-1}(\cdot)Z(\cdot,i)|^{2q_0\xi}\Big)^{\frac1{\xi}}}{\Gamma(x)(1-\e^{-K_0 s})^{\frac d{\xi}}}, \quad \xi>d.
\end{aligned}
\end{equation}
Note that the finiteness of $\displaystyle \max_{i\in\S}\mu_0\Big(|\sigma^{-1}(\cdot)Z(\cdot,i)|^{2q_0\xi}\Big)$ follows easily from the assumption
 \[ \max_{i\in\S}\mu_0\Big(\e^{\eta |\sigma^{-1}(\cdot)Z(\cdot,i)|^2}\Big)<\infty.\]
Therefore,
\begin{equation}\label{f-5}
\begin{split}
  &\E[|M_T\!-\!\wt M_T|^{q_0}]\\
  &\leq C\e^{\frac{K_0}{2\xi-2\sqrt{\xi}}}
  \Big(\!\int_0^T\! \frac{\max_{i\in\S}
  \mu_0\Big(|\sigma^{-1}(\cdot)Z(\cdot,i)|^{2q_0\xi}\Big)^{\frac 1{\xi}}}{(1-\e^{-K_0s})^{\frac{d}{\xi}}}\d s\Big)^{\frac 12}\Big(\!\int_0^T\!\!\p(\La_s\neq \tilde \La_s)\d s\Big)^{\frac 12}
\end{split}
\end{equation} for some constant $C=C(q_0,T)>0$ and $\xi>d$.
By Lemma \ref{lem-2}, we obtain that
\begin{equation}\label{f-6}
 \begin{split}
     & \E [|M_T\!-\!\wt M_T|^{q_0}]\\
      & \leq C\e^{\frac{K_0}{2\xi-2\sqrt{\xi}}}
  \Big(\!\int_0^T\! \frac{\max_{i\in\S}
  \mu_0\Big(|\sigma^{-1}(\cdot)Z(\cdot,i)|^{2q_0\xi}\Big)^{\frac 1{\xi}}}{(1-\e^{-K_0s})^{\frac{d}{\xi}}}\d s\Big)^{\frac 12}\cdot NT\|Q-\wt Q\|_{\ell_1}^{\frac 12}.
 \end{split}
\end{equation}

In the following,   we shall estimate $\E[|\la M\raa_T-\la \wt M\raa_T|^{q_0}]$.
\begin{align*}
  &\E\big[\big|\la M\raa_T-\la \wt M\raa_T\big|^{q_0}\big]\\
  &\leq \E\Big[\Big(\!\int_0^T\!\!|\sigma^{-1}(Y_s)(Z(Y_s,\La_s)\!-\! Z(Y_s,\tilde \La_s)|\big(|\sigma^{-1}(Y_s)Z(Y_s,\La_s)|\!+\!|\sigma^{-1}(Y_s)  Z(Y_s,\tilde \La_s|)\big) \d s\Big)^{q_0}\Big]\\
  &\leq \E\Big[\Big(\!\int_0^T\!\!|\sigma^{-1}(Y_s)(Z(Y_s,\La_s)\!-\! Z(Y_s,\tilde \La_s)|^{\gamma} \d s\Big)^{\frac{q_0}{\gamma}}\\
  &\qquad \quad \cdot\Big(\!\int_0^T\!\!\big(|\sigma^{-1}(Y_s)Z(Y_s,\La_s)|\!+\!|\sigma^{-1}(Y_s) Z(Y_s,\tilde \La_s)|\big)^{\gamma'} \d s\Big)^{\frac{q_0}{\gamma'}}\Big]\\
  &\leq \E\Big[\Big(\!\int_0^T\!\!|\sigma^{-1}(Y_s)(Z(Y_s,\La_s)\!-\! Z(Y_s,\tilde \La_s)|^{\gamma}\d s\Big)^{q_0}\Big]^{\frac1\gamma}\\
  &\qquad \quad \cdot\E\Big[\Big(\!\int_0^T \!\! \big(|\sigma^{-1}(Y_s)Z(Y_s,\La_s)|\!+\!|\sigma^{-1}(Y_s) Z(Y_s,\tilde Y_s)|\big)^{\gamma'} \d s\Big)^{q_0}\Big]^{\frac{1}{\gamma'}},
\end{align*} where $\gamma,\,\gamma'>1$ satisfy $1/\gamma+1/\gamma'=1$.
By Lemma \ref{lem-1}, it is easy to see
\[\E\Big[\Big(\!\int_0^T \!\! \big(|\sigma^{-1}(Y_s)Z(Y_s,\La_s)|\!+\!|\sigma^{-1}(Y_s) Z(Y_s,\tilde Y_s)|\big)^{\gamma'} \d s\Big)^{q_0}\Big]^{\frac{1}{\gamma'}}<\infty.\]
On the other hand,
 \begin{align*}
   &\E\Big[\Big(\!\int_0^T|\sigma^{-1}(Y_s)(Z(Y_s,\La_s)-Z(Y_s,\tilde \La_s))\d s\Big)^{q_0}\Big]\\
   &\leq T^{q_0-1}\Big(\int_0^T\!\E\big[|\sigma^{-1}(Y_s)(Z(Y_s,\La_s)-Z(Y_s,\tilde \La_s))|^{2\gamma q_0}\big]\d s\Big)^{\frac 12}\Big(\int_0^T\!\p(\La_s\neq\tilde \La_s)\d s\Big)^{\frac 12}.
 \end{align*}
 By virtue of \eqref{f-4} and Lemma \ref{lem-2}, we get
 \begin{equation}\label{f-7}
   \begin{split}
       & \E\Big[\Big(\!\int_0^T|\sigma^{-1}(Y_s)(Z(Y_s,\La_s)-Z(Y_s,\tilde \La_s))\d s\Big)^{q_0}\Big]^{\frac 1\gamma} \\
        & \leq C\e^{\frac{K_0}{2\gamma(\xi-\sqrt{\xi})}}
  \Big(\!\int_0^T\! \frac{\max\limits_{i\in\S}
  \mu_0\Big(|\sigma^{-1}(\cdot)Z(\cdot,i)|^{2q_0\gamma\xi}\Big)^{\frac 1{\xi}}}{(1-\e^{-K_0s})^{\frac{d}{\xi}}}\d s\Big)^{\frac 1{2\gamma}} N^{\frac 1{\gamma}}\|Q-\wt Q\|_{\ell_1}^{\frac 1{2\gamma}},
   \end{split}
 \end{equation} where $C=C(T,x_0,q_0)$ is a positive constant.

 In all, inserting the estimates \eqref{f-3}, \eqref{f-6} and \eqref{f-7} into \eqref{f-2},
 we arrive at
 \begin{align*}
   \big|\E f(X_t)-\E f(\wt X_t)\big|&\leq C \big(\|Q-\wt Q\|_{\ell_1}^{\frac 1{2q_0}}\vee\|Q-\wt Q\|_{\ell_1}^{\frac{1}{2q_0\gamma}}\big)
 \end{align*} for some constant $C$  depending on $N, T, x_0, \tau_1, K_0,\xi,\gamma, p_0, \max_{i\in\S}\mu_0\big(\e^{\eta|\sigma^{-1}(\cdot)Z(\cdot,i)|^2}\big)$,  and $\gamma>1$.
 By virtue of the definition of  $W_{bL}(\cdot,\cdot)$,
 \[W_{bL}(\mathcal{L}(X_t),\mathcal{L}(\wt X_t))\leq C \big(\|Q-\wt Q\|_{\ell_1}^{\frac 1{2q_0}}\vee\|Q-\wt Q\|_{\ell_1}^{\frac{1}{2q_0\gamma}}\big).
 \]
 This completes the proof.

 \noindent\textbf{Proof of Theorem \ref{t4}}  This theorem can be proved along the same line as Thereom \ref{t3} by noting $\|Q-\wt Q\|_{\ell_1}\leq \|B\|_{\ell_1}+\|Q_1+\hat Q\|_{\ell_1}$. The details are omitted.

\section{Further discussion}

Recall the expression \eqref{ext} of $Q$. The probabilistic meaning of $q_0$ is that the Markov chain $(\La_t)$ stays at the state ``0" for a random period distributed as an exponential distribution with parameter $q_0$. So the larger the value of $q_0$ is, the shorter time period the process $(\La_t)$ will stay at ``0" in average. One may consider  a limitation case that  $q_0$ equals to $+\infty$, that is,
\[Q_\infty=\begin{pmatrix}
  -\infty&\alpha\\
  \beta& Q_1
\end{pmatrix},\]
which means that the jump will occur immediately once the process $(\La_t)$ reaches the state ``0". The state ``0" in $Q_\infty$ is called  an instantaneous state. It seems also interesting to  study the asymptotic behavior of $Q$ to $Q_\infty$ as $q_0$ tends to $+\infty$. Note that the continuous time Markov chain with instantaneous state produces new phenomenon compared with the Markov chains which are totally stable. For example, consider the well-known example provided by Kolmogorov \cite{Kol}:
\begin{equation*}
  Q=\begin{pmatrix}
    -\infty & 1&1&1&\ldots\\
    q_1&-q_1&0&0&\ldots\\
    q_2&0&-q_2&0&\ldots\\
    q_3&0&0&-q_3&\ldots\\
    \ldots&\ldots &\ldots&\ldots&\ldots
  \end{pmatrix}
\end{equation*}
It was shown by Kendall and Reuter \cite{KR} that if
\[\sum_{j=1}^\infty q_j^{-1}<+\infty,\]
then there exists a Markov process with the generator $Q$. Notice that the state space of this Markov process is denumerable. Moreover, Chen and Reushaw \cite{CR} presented some sufficient conditions for the existence and uniqueness of continuous-time Markov chains with instantaneous states. According to
\cite[Corollary 3.2]{CR},  Markov chains with a finite  states have no instantaneous states.  In the present work  the state space $\S$ of Markov chain is  finite,  we have not consider that the  Markov chain has the generator  $Q_\infty$, and hence the  corresponding processes $(\La_t)$ and $(X_t)$ have not been discussed. Therefore, to study the current problems for regime-switching processes with infinite state space $\S$ and instantaneous state is meaningful,  and we leave it  for further investigation.

\noindent\textbf{Acknowledgement}  We are grateful to Professor Yonghua Mao for sharing his unpublished results,  and valuable discussion on the works of \cite{Mit03,Mit04}.

\end{document}